\numberwithin{equation}{section}
\theoremstyle{definition}
\newtheorem{theorem}[equation]{Theorem}
\newtheorem{corollary}[equation]{Corollary}
\newtheorem{definition}[equation]{Definition}
\newtheorem{lemma}[equation]{Lemma}
\newtheorem{example}[equation]{Example}
\newtheorem{proposition}[equation]{Proposition}
\newtheorem{remark}[equation]{Remark}
\newcommand\reallywidetilde[1]{\ThisStyle{%
		\setbox0=\hbox{$\SavedStyle#1$}%
		\stackengine{-.1\LMpt}{$\SavedStyle#1$}{%
			\stretchto{\scaleto{\SavedStyle\mkern.2mu\AC}{.5150\wd0}}{.6\ht0}%
		}{O}{c}{F}{T}{S}%
}}
\newcommand{\colim@}[2]{%
	\vtop{\m@th\ialign{##\cr
			\hfil$#1\Operator@font colim$\hfil\cr
			\noalign{\nointerlineskip\kern1.5\ex@}#2\cr
			\noalign{\nointerlineskip\kern-\ex@}\cr}}%
}
\newcommand{\colim}{%
	\mathop{\mathpalette\colim@{\rightarrowfill@\scriptscriptstyle}}\nmlimits@
}
\renewcommand{\varprojlim}{%
	\mathop{\mathpalette\varlim@{\leftarrowfill@\scriptscriptstyle}}\nmlimits@
}
\renewcommand{\varinjlim}{%
	\mathop{\mathpalette\varlim@{\rightarrowfill@\scriptscriptstyle}}\nmlimits@
}
\newcommand{\cate}[1]{\mathscr{#1}}
\newcommand{\s}[1]{\mathbb{S}_{\scalebox{1}{$\scriptscriptstyle #1$}}}
\newcommand{\Sh}[2]{\cate{S}\text{hv}({#1}; {#2})}
\newcommand{\coSh}[2]{\C\text{o}\cate{S}\text{hv}({#1}; {#2})}
\newcommand{\sHom}[3]{\text{\underline{Hom}}_{\scalebox{1}{$\scriptscriptstyle #1$}}(#2, #3)}
\newcommand{\Sec}[2]{\Gamma(#1; #2)}
\newcommand{\KSec}[3]{\Gamma_{\scalebox{1}{$\scriptscriptstyle #3$}}(#1; #2)}
\newcommand{\cSec}[2]{\Gamma_{\scalebox{1}{$\scriptscriptstyle c$}}(#1; #2)}
\newcommand{\pf}[1]{#1_{\ast}}
\newcommand{\pb}[1]{#1^{\ast}}
\newcommand{\pfp}[1]{#1_{!}}
\newcommand{\pbp}[1]{#1^{!}}
\newcommand{\pfs}[1]{#1_{\sharp}}
\newcommand{\cSh}[2]{\cate{S}\text{hv}^c({#1}; {#2})}
\newcommand{\fcSh}[2]{\cate{S}\text{hv}^{fc}({#1}; {#2})}
\newcommand{\ccoSh}[2]{\C\text{o}\cate{S}\text{hv}^c({#1}; {#2})}
\newcommand{\fccoSh}[2]{\C\text{o}\cate{S}\text{hv}^{fc}({#1}; {#2})}
\DeclareMathOperator{\Fun}{Fun}
\DeclareMathOperator{\opposite}{op}
\DeclareMathOperator{\exitpath}{Exit}
\DeclareMathOperator{\singularset}{Sing}
\DeclareMathOperator{\depth}{depth}
\DeclareMathOperator{\linking}{Link}
\DeclareMathOperator{\unzipping}{Unzip}
\newcommand{\op}{^{\opposite}}
\newcommand{\C}{\cate{C}}
\newcommand{\D}{\cate{D}}
\newcommand{\E}{\cate{E}}
\newcommand{\X}{\cate{X}}
\newcommand{\Cat}{\cate{C}\mathrm{at}}
\newcommand{\rnum}{\mathbb{R}}
\newcommand{\exit}[2]{\exitpath_{{#1}}({#2})}
\newcommand{\dpt}[1]{\depth({#1})}
\newcommand{\sing}[1]{\singularset({#1})}
\newcommand{\link}[2]{\linking_{{#1}}({#2})}
\newcommand{\unzip}[2]{\unzipping_{{#1}}({#2})}
\providecommand{\sSet}{\text{s}\cate{S}\text{et}}
\providecommand{\infcat}{\Cat_{\scalebox{1}{$\scriptscriptstyle \infty$}}}
\providecommand{\spaces}{\cate{S}}
\providecommand{\spectra}{\cate{S}\text{p}}
\newsavebox{\pullback}
\sbox\pullback{%
	\begin{tikzpicture}%
		\draw (0,0) -- (1ex,0ex);%
		\draw (1ex,0ex) -- (1ex,1ex);%
\end{tikzpicture}}
\newsavebox{\pushout}
\sbox\pushout{%
	\begin{tikzpicture}%
		\draw (0ex,0ex) -- (0ex,1ex);%
		\draw (0ex,1ex) -- (1ex,1ex);%
\end{tikzpicture}}
\tikzset{%
	symbol/.style={%
		draw=none,
		every to/.append style={%
			edge node={node [sloped, allow upside down, auto=false]{$#1$}}}
	}
}
\begin{document}
		\title{Verdier duality on conically smooth stratified spaces}
	\author{Marco Volpe\footnote{University of Regensburg, Universitätsstraße 31, Regensburg. email: marco.volpe@ur.de. The author was supported by the SFB 1085 (Higher Invariants) in Regensburg,
			Germany, funded by the DFG (German Science Foundation).
	}}
	\maketitle
	\begin{abstract}
		In this paper we prove a duality  for constructible sheaves on conically smooth stratified spaces. Here we consider sheaves with values in a stable and bicomplete $\infty$-category equipped with a closed symmetric monoidal structure, and in this setting constructible means locally constant along strata and with dualizable stalks. The crucial point where we need to employ the geometry of conically smooth structures is in showing that Lurie's version of Verdier duality restricts to an equivalence between constructible sheaves and cosheaves: this requires a computation of the exit paths $\infty$-category of a compact stratified space, that we obtain via resolution of singularities.
	\end{abstract}
	\tableofcontents
	\section{Introduction}
	
	 Constructible sheaves are of great interest both in algebraic and differential geometry, as they provide tools to study invariants for singular spaces (such as \textit{intersection cohomology} \cite{beilinson2018faisceaux}) and have relations with D-modules (see \cite{kashiwara1984riemann}). Roughly speaking, constructible sheaves are sheaves on stratified spaces that behave nicely on strata (see \cref{constrdef} for more details). A fundamental feature of constructible sheaves is that, assuming a finiteness condition on the stalks, they carry a duality sometimes referred to as \textit{Verdier duality}. This is an anti-equivalence of the category of constructible sheaves to itself, which is defined by taking an internal-hom into the \textit{dualizing complex}. Verdier duality has many applications: for example, using abstract trace methods it allows one to associate to any constructible sheaf a class in Borel--Moore homology. One of the interests of these classes is that they can be related to Euler characteristics via computations with the six functor formalism (see \cite[Chapter 9]{kashiwara1990sheaves} for a discussion on classical index formulas and their microlocal enhancements).
	 
	 As far the author knows, it was an idea of MacPherson that, in the case when strata are manifolds, the duality should be thought of as a combination of two different equivalences of categories. The first, induced by the construction of \textit{sections with compact support}, was expected to identify constructible sheaves with constructible \textit{cosheaves}. Without constructibility assumptions, this was proven by Lurie in \cite[Theorem 5.5.5.1]{lurie2017higher}. The second maps back contravariantly construtible cosheaves to sheaves, and is obtained using a foreseen combinatorial description of constructible (co)sheaves, similar in spirit to the monodromy for local systems. Following \cite{barwick2018exodromy}, we refer to this combinatorial description as \textit{exodromy} (see \cite[Theorem A.9.3]{lurie2017higher}, \cite[Theorem 1.2.5]{ayala2017local}). The topological exodromy equivalence makes use of a generalization of the homotopy type of a stratified topological space, that keeps track of the stratification. This is known as the category of \textit{exit paths} of a stratified topological space (see \cite[Definition A.6.2]{lurie2017higher}, \cite{treumann2009exit}). In this paper we make use of the language of $\infty$-categories to realize the vision of MacPherson and prove the expected duality result in a very general setting. 

     The first appearance of a proof of Verdier duality following the approach proposed by MacPherson can be found in \cite[Theorem 7.7]{curry2018dualities}. In that work, the author deals with derived categories of constructible (co)sheaves of vector spaces on locally finite regular CW complexes. This approach was later generalized in \cite{aoki2022posets}. It is worth noting that \cite{aoki2022posets} works with spectra-valued functors on posets. In our setting, a stratum can be any smooth manifold. Hence, via exodromy we obtain a duality for spectra-valued functors on $\infty$-categories that are not necessarily posets. Another mention of Verdier duality appears in \cite[Example 1.10.8]{ayala2019stratified}. In that paper, the authors outline a strategy to prove Verdier duality on stratified topological spaces, which is essentially the same as the one we employ in this paper. However, it is important to highlight that some of the main steps in their outline lack a rigorous proof (see \cref{>AMGR} for a more detailed comment).
	 
	 Let us spend a few words to specify more precisely the framework in which we are working.
	 Relying on the previous paper \cite{volpe2021six}, we will be able to deal with sheaves valued in any stable bicomplete $\infty$-category $\C$, equipped with a closed symmetric monoidal structure. The machinery of six functors developed in \cite{volpe2021six} supplies us with a \textit{dualizing sheaf} $\omega_X^{\C}$ for any $\C$ as above and $X$ a locally compact Hausdorff stratified space. More precicely, if $a: X\rightarrow\ast$ is the unique map, $\omega_X^{\C}$ is defined by applying the functor
	 $$\pbp{a}_{\C}:\C\rightarrow\Sh{X}{\C}$$
	 to the monoidal unit of $\C$. Our duality functor will be given by taking an internal-hom into $\omega_X^{\C}$, and denoted by $D_X^{\C}$.
	 
	 Following the nomenclature of \cite{barwick2018exodromy}, we will define a sheaf with values in $\C$ to be \textit{formally constructible}\footnote{Notice that in this paper we will only deal with sheaves which are constructible with respect to a fixed stratification, as opposed to \cite[Chapter 8]{kashiwara1990sheaves}, for example.} if its restriction to each stratum is locally constant, and \textit{constructible} if furthermore all of its stalks are dualizable. Similar definitions can be given for $\C$-valued cosheaves by observing that, up to passing to an opposite category, these are $\C\op$-valued sheaves. 
	 
	 \begin{remark}
	 	The requirement of dualizability for stalks is unavoidable because, when $X$ is the point, $\omega_X^{\C}$ is the monoidal unit of $\C$ and the duality functor coincides with the one coming from the monoidal structure on $\C$. Furthermore, this assumption is highly reasonable. For example, if $\C= D(R)$ and $R$ is a commutative ring (or more generally, modules over any $E_{\infty}$-ring spectrum), it is a well-known result that a complex is dualizable if and only if it is perfect (see for example \cite[Proposition 7.2.4.4]{lurie2017higher} for a proof of a more general statement about $E_1$-ring spectra). Consequently, under our assumptions, we are able to recover the classical setting as a special case.
	 \end{remark} 
	 
	 For the geometric side of the story, we will consider \textit{conically smooth stratified spaces}\footnote{Most part of our strategy to prove Verdier duality works more generally for $C^0$-stratified spaces (see \cite[Definition 2.1.15]{ayala2017local}). However, the proof of \cref{finexit} relies on the existence of blow-ups, which are not available without the presence a conically smooth structure. In a future paper (see \cite{volpe2023finiteness}), we will use some general facts about stratified homotopy types, to show that the exit path $\infty$-category of a compact $C^0$-stratified space is a compact object in $\infcat$.}. These were introduced by Ayala, Francis and Tanaka in \cite{ayala2017local}, and provide a natural extension of $C^{\infty}$-structures in the stratified setting. Notable examples of stratified spaces admitting a conically smooth atlas are \textit{Whitney stratified spaces}, as proven in \cite{nocera2021whitney}. The definition of conically smooth atlases is rather involved, as it relies on a elaborate inductive construction based on the \textit{depth} of a stratification. For convenience of the reader, we recall the definition of depth.
	 
	 \begin{definition}
	 	Let $s : X\rightarrow P$ be a stratified space. Then the depth is defined as $$\dpt{X} = \sup\limits_{x\in X}(\text{dim}_x(X) - \text{dim}_x(X_{s(x)})),$$ where dim denotes the covering dimension and $X_{s(x)}$ is the stratum of $X$ corresponding to $s(x)\in P$.
	 \end{definition}
 
    We suggest the reader has a look at the introductions of \cite{ayala2017local} or \cite{nocera2021whitney} to get an idea of how this works. 
	 
    The main feature of conically smooth structures we will use in this paper is \textit{unzip} construction (see \cite[Definition 7.3.11]{ayala2017local}), that allows one to functorially resolve any conically smooth stratified space into a manifold with corners. We will give a brief explanation of how this works later in the paper, but for now let us only mention that, if $X_k\hookrightarrow X$ is the inclusion of a statum of maximal depth, it consists of a square
	 \begin{equation}\label{blowupstrata}
	 	\begin{tikzcd}
	 		\link{k}{X} \arrow[d, "{\pi_{X}}"] \arrow[r, hook]  \arrow[dr, phantom, "\usebox\pullback" , very near start, color=black] & \unzip{k}{X} \arrow[d] \\
	 		X_k \arrow[r, hook] & X \arrow[ul, phantom, "\usebox\pushout" , very near start, color=black]
	 	\end{tikzcd}
	\end{equation}
	 which is both pushout and pullback, and $\unzip{k}{X}$ is a conically smooth manifold \textit{boundary} given by $\link{k}{X}$ such that both its interior and $\link{k}{X}$ have depth strictly smaller than the one of $X$. An interesting consequence of the existence of pushout/pullback square (\ref{blowupstrata}) is that the notion of conically smooth map is completely determined by the one of smooth maps between manifolds with corners. 
	 
	 We are now ready to state our main result.
	 
	 \begin{theorem}[\cref{verdduality}]
	 	Let $X$ be a conically smooth stratified space, and let $\cSh{X}{\C}$ be the full subcategory of $\Sh{X}{\C}$ spanned by constructible sheaves. Then the functor $$D_X^{\C}: \Sh{X}{\C}\op\rightarrow\Sh{X}{\C}$$
	 	 restricts to an equivalence of $\infty$-categories
	 	$$
	 	\begin{tikzcd}
	 		D_X^{\C}:\cSh{X}{\C}\op\arrow[r, "\simeq"] & \cSh{X}{\C}.
	 	\end{tikzcd}
	 	$$
	 \end{theorem}
	 
	 To conclude this introduction, let us make a short comment on how our proof strategy goes. As mentioned earlier, our first observation is that the functor $D_X^{\C}$ factors through the equivalence 
	 $$\mathbb{D}_{\C} : \Sh{X}{\C}\rightarrow\coSh{X}{\C},$$
	 proven by Lurie in \cite[Theorem 5.5.5.1]{lurie2017higher}.
	 Most of the work then lies in proving that the restriction of $\mathbb{D}_{\C}$ to constructible sheaves factors through constructible cosheaves. We first show in \cref{dualconstr} that $\omega_X^{\C}$ is constructible when $\C= \spectra$ (the $\infty$-category of spectra), and from the techniques developed in \cite{volpe2021six} we deduce immediately that 
	 $$\pbp{a}_{\C} : \C\rightarrow\Sh{X}{\C}$$
	 factors through formally constructible sheaves. As a consequence of this and some properties of constructible sheaves that follow from \textit{homotopy invariance} (see \cref{htpyinv}), one deduces that $\mathbb{D}_{\C}$ maps formally constructible sheaves into formally constructible cosheaves. We stress that being able to work with such a general class of coefficients, which is closed under passing to opposite categories, makes this step extremely formal.
	 
	 The missing piece is then showing that $\mathbb{D}_{\C}$ preserves the property of having dualizable stalks. This is the point where we have to employ the geometry of conically smooth structures. More specifically, we use the unzip constuction and an inductive argument on the depth to prove that any compact stratified space equipped with a conically smooth structure has a finite exit paths $\infty$-category (\cref{finexit}). For simplicity, let us explain how to use \cref{finexit} in the special case $X = C(Z)$ with $Z$ compact, where $C(Z)$ denotes the cone on $Z$. If $x\in X$ is the cone point and $F$ is any constructible sheaf on $X$, there is a fiber sequence
	 \begin{equation}\label{fibseq}
          \KSec{X}{F}{x}\rightarrow F_x \rightarrow\Sec{Z}{F}.
	 \end{equation}
     Here $\KSec{X}{F}{x}$ denotes the sections of $F$ supported at $x$ (i.e. the stalk of the associated cosheaf of compactly supported sections of $F$) and $F_x$ is the stalk of $F$ at $x$. By \cref{finexit} and the exodromy equivalence (which we show holds also for our general class of coefficients in \cref{exodromy}), one deduces that $\Sec{Z}{F}$ is dualizable. Thus, using the fiber sequence above, we see that $F_x$ is dualizable if and only if $\KSec{X}{F}{x}$ is, which proves our claim.
	 
	 \subsection{Linear overview}
	 
	 We now give a linear overview of the results in our paper. 
	 
	 Section 2 is mainly devoted to the proof \cref{finexit}. In the first part we recall the definition of a finite $\infty$-category, and show how these can be described in the model of quasi-categories. None of these results or definitions are new, but we decided to include a few words on the subject since we could not find any reference dealing with it in our preferred fashion. In the second part we recall Lurie's definition of the simplicial set of exit-paths of a stratified topological space. Given a proper stratified fiber bundle $\pi: L\rightarrow X$, we show how one can conveniently compute the exit-paths of the fiberwise cone of $\pi$ in terms of $L$ and $X$. To prove \cref{finexit}, we cover a conically smooth stratified space with the open subset given by the locus of points of depth zero and a tubular neighbourhood of its complement. By induction and \cref{exitcone}, one is then left to show that the exit paths $\infty$-category of the former is finite. This is proven using the unzip construction. Namely, by unzipping the complement, the open subset of points of depth $0$ can be identified with the interior of a compact manifold with corners. 
	 
	 In Section 3 we deal with extending the results of \cite{Haine2020TheHO} to sheaves values in stable bicomplete $\infty$-categories. This is very formal, after \cite{volpe2021six}. As a consequence, we show that the stalk at a point $x$ of a constructible sheaf is the same as sections at any conical chart around $x$ (\cref{stalksconstr}). We also provide a convenient description of the restriction of a construtible sheaf along a stratum (\cref{restrata}). These two results are essential and are used very often in what follows. For example, the first immediately implies the existence of the fiber sequence (\ref{fibseq}). In the second part we will then characterize constructible sheaves by the property of being homotopy invariant (see \cref{charconstr}), and use this to deduce exodromy for conically smooth spaces with general stable bicomplete coefficients (\cref{exodromy}).
	 
	 Section 4 deals with proving our main result. Given a $C^0$-stratified space $X$,  through an inductive argument on the depth we show that $\omega_X^{\C}$ is constructible (\cref{dualconstr}). We first reduce to proving the statement in the case $\C= \spectra$ by employing the techniques developed in \cite{volpe2021six}. Then the only non-trivial part consists in showing that, when $X$ is a cone, the stalk of the dualizing sheaf at the cone-point is a finite spectrum. We then conclude by proving \cref{verdduality}. As explained at the beginning of the introduction, our argument starts by observing that the duality functor factors through Lurie's Verdier duality. The hard part then consists in showing that the latter restricts to an equivalence between constructible (co)sheaves, for which we have to implement all the results obtained previously in the paper.
	 
	 Finally, in the appendix, we show that the shape of any proper and locally contractible $\infty$-topos is a compact object in the $\infty$-category of $\infty$-groupoids.
	    
	 \subsection{Acknowledgements}   
	 I owe my interest and fascination with the theory of conically smooth stratified spaces to my supervisor Denis-Charles Cisinski: the enthusiasm he showed while explaining to me the main ideas of \cite{ayala2018stratified} gave me the motivation to start reading \cite{ayala2017local}, which incidentally led to the birth of this paper and my collaboration with Guglielmo Nocera \cite{nocera2021whitney}.
	 
	 Of course, the second person I would like to thank is Guglielmo himself. The hours of discussions we've had through zoom during early pandemic times, and later in Regensburg while he stayed there for a visit, have helped me to sharpen my intuition on stratified spaces. The importance of our friendship too cannot be underestimated.
	 
	 I'd also like to thank Benedikt Preis for asking me if it was possible to deduce the duality for constructible sheaves from Lurie's covariant Verdier duality; George Raptis for his support, especially during my self-imposed exile in Italy, and for being a constant reminder of the importance of learning also some geometry, every once in a while; Mauro Porta, Jean-Baptiste Teyssier and Peter Haine for sharing earlier drafts of their papers on homotopy invariance and exodromy with me, and for long and stimulating discussions during my stay in Paris and through zoom; Denis Nardin, Andrea Gagna, Edoardo Lanari, David Ayala, Niklas Kipp, Sebastian Wolf, Han-Ung Kufner and Kaif Hilman for answering (sometimes silly) questions related to the subject of this paper, and for constantly providing stimulating math conversations. 
	 
	 
	 This paper is a part of my PhD thesis.
	 	
	\section{Finite exit paths}
	
	This first section contains the main geometric input needed to achieve our goal. Namely, we show the exit-paths $\infty$-category of a compact conically smooth stratified space is finite (\cref{finexit}). 
	
	\subsection{Finite $\infty$-categories}
	
	This short section is devoted to recalling the definition of a finite $\infty$-category. Before going into that, we say a few words about what an $\infty$-category is for us. In this paper, we work in the model of quasicategories. Let $\sSet$ be the category of simplicial sets. Following \cite[Example 7.10.14]{cisinski2019higher}, we define $\infcat$ as the localization of $\sSet$ at the class of Joyal equivalences. The class of Joyal equivalences and fibrations equip $\sSet$ with the structure of a category with weak equivalences and fibrations in the sense of \cite[Definition 7.4.12]{cisinski2019higher}. Hence, by \cite[Theorem 7.5.18]{cisinski2019higher} it follows that any object in $\infcat$ is equivalent to the image through the localization functor $\sSet\rightarrow\infcat$ of a fibrant object in the Joyal model structure. For this reason, objects of $\infcat$ will be called $\infty$-categories.
	
	The first definition we propose is expressed internally to the $\infty$-category $\infcat$ in terms of pushouts, and so in a kind of model-independent fashion. Later we prove that this is actually equivalent to a notion of finiteness that one might expect in the simplicial model. All the results appearing here are not at all original, but we still felt the need to write this section as, in the process of completing the paper, we could not locate a reference dealing with the subject. In what follows, we will denote by $\spaces$ the full subcategory of $\infcat$ spanned by $\infty$-groupoids.
	
	\begin{definition}\label{fincat}
		An $\infty$-category is said to be \textit{finite} if it belongs to the smallest full subcategory of $\infcat$ which contains $\emptyset$, $\Delta^0$, and $\Delta^1$ and is closed under pushouts. An $\infty$-groupoid is said to be finite if it is so as an $\infty$-category. We will denote by $\infcat^{f}$ and $\spaces^{f}$ respectively the full subcategories of $\infcat$ and $\spaces$ spanned by finite objects.
	\end{definition}
	
	\begin{remark}\label{fingpds}
		Recall that the inculsion $\spaces\hookrightarrow\infcat$ admits both a left and a right adjoint, and one may describe the left adjoint on objects by sending and $\infty$-category $\C$ to the localization $\C[\C^{-1}]$. Thus, since $\spaces\hookrightarrow\infcat$ preserves colimits and $\Delta^1[(\Delta^1)^{-1}]\simeq\Delta^0$, one may identify the class of finite $\infty$-groupoids with the objects of the smallest full subcategory of $\spaces$ which contains $\emptyset$ and $\Delta^0$ and it is closed under pushouts. This implies in particular that, for any finite $\infty$-category $\C$, the localization $\C[\C^{-1}]$ is again finite.
	\end{remark}
	
	\begin{lemma}\label{finloc}
		Let $\C$ be a finite $\infty$-category, and let $W$ be a finite subcategory of $\C$. Then the localization $\C[W^{-1}]$ is again finite.
	\end{lemma}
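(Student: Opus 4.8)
The plan is to exhibit $\C[W^{-1}]$ as a pushout of finite $\infty$-categories, and then to conclude by the closure of $\infcat^{f}$ under pushouts. The crucial input is the formula
\[
	\C[W^{-1}]\simeq\C\sqcup_{W}W[W^{-1}],
\]
where $W[W^{-1}]$ denotes the localization of $W$ at all of its morphisms (equivalently, by \cref{fingpds}, the image of $W$ under the left adjoint to $\spaces\hookrightarrow\infcat$), the left leg of the pushout being the inclusion $W\hookrightarrow\C$ and the right leg being the localization functor $W\to W[W^{-1}]$.

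To establish this formula I would compare universal properties. For any $\infty$-category $\D$, the defining pushout square yields
\[
	\Fun(\C\sqcup_{W}W[W^{-1}],\D)\simeq\Fun(\C,\D)\times_{\Fun(W,\D)}\Fun(W[W^{-1}],\D).
\]
The functor $\Fun(W[W^{-1}],\D)\to\Fun(W,\D)$ is fully faithful, with essential image the full subcategory of functors inverting every morphism of $W$; hence the fibre product above is the full subcategory of $\Fun(\C,\D)$ spanned by those $F$ for which $F|_{W}$ inverts every morphism of $W$, which is precisely $\Fun(\C[W^{-1}],\D)$. Since this identification is natural in $\D$, passing to maximal subgroupoids and applying the Yoneda lemma in $\infcat$ gives $\C\sqcup_{W}W[W^{-1}]\simeq\C[W^{-1}]$.

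Granting the formula, the lemma is immediate: $\C$ and $W$ are finite by hypothesis, $W[W^{-1}]$ is finite by \cref{fingpds}, and $\infcat^{f}$ is closed under pushouts by \cref{fincat}; therefore $\C[W^{-1}]$ is finite. The main obstacle is the pushout formula itself, and in particular the verification that the fibre product of functor $\infty$-categories displayed above is computed as a full subcategory of $\Fun(\C,\D)$: this relies on the fully faithfulness of $\Fun(W[W^{-1}],\D)\to\Fun(W,\D)$ together with the universal property of the localization, and in fact uses nothing about the map $W\to\C$ beyond its being a functor.
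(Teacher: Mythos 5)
Your proof is correct and follows essentially the same route as the paper: the paper likewise exhibits $\C[W^{-1}]$ as the pushout $\C\sqcup_{W}W[W^{-1}]$ and concludes via \cref{fingpds} and closure of $\infcat^{f}$ under pushouts. The only difference is that you also verify the pushout formula via universal properties, which the paper simply asserts.
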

	
	\begin{proof}
		We have a pushout square
		$$
		\begin{tikzcd}
			W\ar[r] \ar[d] & \C \ar[d] \\
			W[W^{-1}] \ar[r] & \C[W^{-1}], \arrow[ul, phantom, "\usebox\pushout" , very near start, color=black]
		\end{tikzcd}
		$$
		in $\infcat$, thus it suffices to show that $W[W^{-1}]$ is finite. This follows immediately by \cref{fingpds}.
	\end{proof}


	
	Recall that a simplicial set is said to be \textit{finite} if it has a finite number of non-degenerate simplices. In the next proposition we reconcile this notion of finiteness with the one in \cref{fincat}. We will need the following lemma, whose proof was explained to us by Sebastian Wolf.
	
	\begin{lemma}\label{basti}
		Let $\C$ be an $\infty$-category, and let $f:K\rightarrow\C$ be any map of simplicial sets, where $K$ is finite. Moreover, suppose that there exists a finite simplicial set $K'$ and a Joyal equivalence $g: K'\rightarrow\C$. Then ther exists a finite simplicial set $L$, a Joyal equivalence $j:L\rightarrow\C$ and a commutative diagram in $\sSet$
		$$
		\begin{tikzcd}
			& \C \\
			K\arrow[ur, "f"] \arrow[r, hook, "k"] & L\arrow[u, "j"']
		\end{tikzcd}
		$$
		where $k$ is a monomorphism.
	\end{lemma}

    \begin{proof}
    	We define inductively a sequence of finite simplicial sets $\{K'_n\}_{n\in\mathbb{N}}$. We set $K_0' = K'$, and we define $K_n'$ via the pushout
    	$$
    	\begin{tikzcd}
    		\coprod\limits_{\Lambda_j^n\rightarrow K'_{n-1}}\Lambda_j^n\arrow[r]\arrow[d, hook] & K'_{n-1}\arrow[d, hook] \\
    		\coprod\limits_{\Lambda_j^n\rightarrow K'_{n-1}}\Delta^n\arrow[r] & K'_n. \arrow[ul, phantom, "\usebox\pushout" , very near start, color=black]
    	\end{tikzcd}
    	$$
    	Now set
    	$$K'_{\infty}\coloneqq\text{colim}(K_0'\hookrightarrow K_1'\hookrightarrow\dots\hookrightarrow K'_n\hookrightarrow\dots).$$
    	Furthermore, since all horns are finite simplicial sets, any map $\Lambda_j^n\rightarrow K'_{\infty}$ factors through some $K_m'$. By construction of the sequence, we get a commutative diagram 
    	$$
    	\begin{tikzcd}
    		\Lambda_j^n\arrow[r]\arrow[d] & K'_m \arrow[r, hook] \arrow[d, hook] & K'_{\infty} \\
    		\Delta^n\arrow[r, dotted] & K'_{m+1}\arrow[ur, hook]
    	\end{tikzcd}
    	$$
    	which implies that $K'_{\infty}$ is an $\infty$-category. Since the class of categorical anodyne extensions is saturated (see \cite[Definition 3.3.3]{cisinski2019higher}), we see that $K'\hookrightarrow K'_{\infty}$ is a categorical anodyne extension, and in particular a Joyal equivalence. Hence, by the assumption that $\C$ is an $\infty$-category, we get a commutative triangle
    	$$
    	\begin{tikzcd}
    		K'\arrow[r, "g"]\arrow[d, hook] & \C \\
    		K'_{\infty}\arrow[ur, dotted, "\phi"']
    	\end{tikzcd}
    	$$
    	where $\phi$ is a Joyal equivalence by the 2-out-of-3 property. Since $K'_{\infty}$ is also an $\infty$-category, $\phi$ admits a quasi-inverse
    	$$\psi:\C\rightarrow K'_{\infty}.$$
    	By the finiteness of $K$, we get that the composition
    	$$\psi f: K\rightarrow \C\rightarrow K'_{\infty}$$
    	factors through some $\delta:K\rightarrow K'_n$. Thus, we get a triangle
    	$$
    	\begin{tikzcd}
    		& \C & \\
    		K\arrow[ur, "f"]\arrow[r, "\delta"] & K'_n\arrow[r, hook] & K'_{\infty}\arrow[ul, "\phi"']
    	\end{tikzcd}
    	$$
    	which commutes up to $J$-homotopy, where $J$ is the interval object for the Joyal model structure as defined in \cite[Definition 3.3.3]{cisinski2019higher}.
    	
    	Let now $L$ be the mapping cylinder of $\delta$. Since $J$ is a finite simplicial set, $L$ must be finite as well. By the usual factorizations obtained via mapping cylinders, we get a triangle
    	$$
    	\begin{tikzcd}
    		& \C \\
    		K\arrow[ur, "f"] \arrow[r, hook, "i"] & L\arrow[u, "p"']
    	\end{tikzcd}
    	$$
    	commuting up to $J$-homotopy, where $i$ is a monomorphism and $p$ is a Joyal equivalence. If $H$ is a $J$-homotopy between $f$ and $pi$, we may find a map $\tilde{H}$ fitting in the diagram
    	$$
    	\begin{tikzcd}
    		K\times J \cup L\times\{1\}\arrow[r, "{H\cup p}"]\arrow[d, hook] & \C \\
    		L\times J \arrow[ur, "\tilde{H}"', dotted] &
    	\end{tikzcd}
    	$$
    	since $K\times J \cup L\times\{1\}\hookrightarrow L\times J$ is a categorical anodyne extension. Denote by $j$ the restriction of  $\tilde{H}$ to $L\times\{0\}$. By construction, we get a commutative triangle 
    	$$
    	\begin{tikzcd}
    		& \C \\
    		K\arrow[ur, "f"] \arrow[r, hook, "i"] & L.\arrow[u, "j"']
    	\end{tikzcd}
    	$$
    	Since the map $j$ is $J$-homotopic to $p$, it is a Joyal equivalence, and therefore our proof is concluded.
    \end{proof}
	
	\begin{proposition}\label{finssetfininfcat}
		Let $\gamma: \sSet\rightarrow\infcat$ be the localization functor. Then an $\infty$-category $\C$ is finite if and only if there exists a finite simplicial set $K$ and an equivalence $\C\simeq \gamma(K)$. 
	\end{proposition}
	
	\begin{proof}
		Let $\sSet^{f}$ be the full subcategory of $\sSet$ spanned by the finite simplicial sets, and denote by $\cate{F}$ the essential image of the restriction of $\gamma$ to $\sSet^{f}$. We need to show that $\infcat^f$ coincides with $\cate{F}$. 
		
		Let $K$ be any finite simplicial set, so that in particular there exists some finite $n$ such that $K = \text{sk}_n(K)$. By induction on $n$ and using the cellular decomposition 
		$$
		\begin{tikzcd}
			\coprod\limits_{\partial\Delta^n\rightarrow K}\partial\Delta^n \arrow[r] \arrow[d, hook] & \text{sk}_{n-1}(K) \arrow[d] \\
			\coprod\limits_{\Delta^n\rightarrow K}\Delta^n \arrow[r] & \text{sk}_{n}(K) \arrow[ul, phantom, "\usebox\pushout" , very near start, color=black]
		\end{tikzcd}
		$$
		we see that to prove that $L(K)$ belongs to $\infcat^f$, it suffices to show that each $\Delta^n$ does. But this is clear because the $n$-simplex is Joyal equivalent to the $n$-spine. Thus we have $\cate{F}\subseteq\infcat^f$.
		
		Since $\cate{F}$ contains $\emptyset$, $\Delta^0$ and $\Delta^1$, we are now only left to show that $\cate{F}$ is closed under pushouts. Let
		$$\D\leftarrow\C\rightarrow\E$$
		be any cospan of $\infty$-categories in $\cate{F}$, and let $K\rightarrow\C$ be any Joyal equivalence, where $K$ is a finite simplicial set. Thus, by applying \cref{basti} twice, we get a map of cospans
		$$
		\begin{tikzcd}
			L\arrow[d] & K\arrow[l, hook']\arrow[r, hook]\arrow[d] & M\arrow[d] \\
			\D & \C\arrow[l]\arrow[r] & \E
		\end{tikzcd}
		$$
		where the vertical arrows are Joyal equivalences and the upper horizontal arrows are monomorphisms. We then get a Joyal equivalence between the respective homotopy pushouts, and thus the desired conclusion.
	\end{proof}

	\subsection{Finiteness properties of compact conically smooth spaces}
	
	The main goal of this section is to show that the exit paths $\infty$-category of a compact conically smooth stratified space is finite (\cref{finexit}). For this purpose, we make use of Lurie's model of the exit paths $\infty$-category, of which we now recall the definition for the reader's convenience. 
	
	By a slight abuse of notation, for a poset $P$ we still denote by $P$ the topological space obtained by equipping the poset with the Alexandroff topology. If $X\rightarrow P$ is a stratified topological space, then we define $\exit{P}{X}$ by forming the pullback
	
	$$
	\begin{tikzcd}
		\exit{P}{X}\arrow[r] \arrow[d] \arrow[dr, phantom, "\usebox\pullback" , very near start, color=black] & \sing{X} \arrow[d] \\
		N(P) \arrow[r] & \sing{P}
	\end{tikzcd}
	$$
	in the category of simplicial sets. Lurie showed that if the stratification $X\rightarrow P$ is conical, then $\exit{P}{X}$ is an $\infty$-category (\cite[Theorem A.6.4]{lurie2017higher}).
	
	\begin{example}\label{exampleexit}
		Let us give an example of the exit paths $\infty$-category of a stratified topological space for the reader's convenience. Consider the stratified space $\rnum^2\rightarrow\{0<1\}$, where the closed stratum is given by the origin. This can be identified, up to stratified homeomorphism, with the cone on $S^{1}$, with its natural stratification. Using \cref{exitcone}, one can show that $\exit{\{0<1\}}{\rnum^2}$ is equivalent to the $\infty$-category $(B\mathbb{Z})^{\triangleleft}$. This is given by formally adding an initial object to the classifying space $B\mathbb{Z}$. More explicitely, let us denote by $x$ the origin, and $y$ any point different from $x$. Then one can describe $\exit{\{0<1\}}{\rnum^2}$ as a 1-category with two distinct objects $x$ and $y$, where $x$ is initial and the monoid of endomorphisms of $y$ is given by $\mathbb{Z}$.
	\end{example}
	
	\begin{remark}
		In what follows, we often consider a stratified space $X$ without specifying any particular notation for its stratifying poset. In that case, by a slight abuse of notation, we write $\exit{}{X}$ for the $\infty$-category of exit paths of $X$.
	\end{remark}
	
	In \cite[Definition 1.1.5]{ayala2017local}, the authors propose an alternative model of the opposite of the $\infty$-category of exit paths of a conically smooth stratified space, called the \textit{enter paths $\infty$-category}. Let us briefly recall this definition, as it also allow us to introduce some notations that are used throughout our paper.
	
	Let Snglr be the 1-category whose objects are conically smooth stratified spaces and morphisms are conically smooth open immersions, and Bsc the full subcategory of Snglr spanned by \textit{basic} conically smooth stratified spaces, i.e. those which are isomorphic to one of the type $\rnum^n\times C(Z)$, where $Z$ is compact and conically smooth.
	
	In \cite[Lemma 4.1.4]{ayala2017local}, the authors show that Snglr (and therefore Bsc) admits an enrichment in Kan complexes. By passing to homotopy coherent nerves, one gets $\infty$-categories that we denote by 
	\begin{equation}\label{catsofstratspaces}
		\cate{B}\text{sc}\rightarrow \cate{S}\text{nglr}.
	\end{equation}
	
	For any conically smooth stratified space, the authors of \cite{ayala2017local} then define $\text{Entr}(X)$ as the slice $\cate{B}\text{sc}_{/X}$. More precisely, this is defined to be the pullback 
	$$
	\begin{tikzcd}
		\text{Entr}(X)\arrow[r]\arrow[d]\arrow[dr, phantom, "\usebox\pullback" , very near start, color=black] & \cate{S}\text{nglr}_{/X}\arrow[d] \\
		\cate{B}\text{sc}\arrow[r] & \cate{S}\text{nglr}.
	\end{tikzcd}
	$$ 
	Their proof of the exodromy equivalence for constructible sheaves of spaces, combined with the one in \cite{lurie2017higher}, implies that Lurie's exit paths $\infty$-category has to be equivalent to $\text{Entr}(X)\op$ (see \cite[Corollary 1.2.10]{ayala2017local}). 
	
	In this section, we prefer to use Lurie's model, because it has an evident much richer functoriality. Indeed, one sees by the functoriality of $\singularset$ that $\exitpath$ is functorial with respect to general stratified maps. On the other hand, the one in \cite{ayala2017local} is only functorial with respect to conically smooth open embeddings. We also see immediately that, if we stratify $P$ over itself through the identity, then $\exit{P}{P} = N(P)$.  
	
	\begin{definition}
		Let $f :(X\rightarrow P)\rightarrow(Y\rightarrow Q)$ be a map of stratified spaces. We say that $f$ is a \textit{full inclusion of strata} if the underlying map of posets $P\rightarrow Q$ is injective and full, and the square
	$$
	\begin{tikzcd}
		X\arrow[r]\arrow[d] & Y\arrow[d] \\
		P\arrow[r] & Q
	\end{tikzcd}
    $$
	is a pullback of topological spaces.
	\end{definition}

    We will also need the following lemma.

	\begin{lemma}\label{consecutiveff}
		Let $X\rightarrow P$ and $Y\rightarrow Q$ be stratified spaces, and assume that the stratification on $X$ is conical. Assume that we have a stratified embedding $Y\hookrightarrow X$ which is a full inclusion of strata. Then $\exit{Q}{Y}$ is an $\infty$-category and the induced functor $\exit{Q}{Y}\rightarrow \exit{P}{X}$ is fully faithful.
	\end{lemma}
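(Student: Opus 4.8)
The plan is to realize $\exit{Q}{Y}$ as a full sub-simplicial-set of $\exit{P}{X}$, and then to invoke the elementary fact that a full sub-simplicial-set of a quasi-category is again a quasi-category whose inclusion is fully faithful.

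First I would fix notation so that the stratified embedding presents $Q$ as a subposet of $P$ (the posets of strata), with $Y = s^{-1}(Q)$ carrying the subspace topology and the restricted stratification; the word ``consecutive'' ensures this really is a bona fide stratified subspace, but plays no further role. The first step is to unwind the two defining pullback squares $\exit{P}{X} = \sing{X}\times_{\sing{P}}N(P)$ and $\exit{Q}{Y} = \sing{Y}\times_{\sing{Q}}N(Q)$ and produce a canonical isomorphism of simplicial sets
$$\exit{Q}{Y}\;\cong\;\exit{P}{X}\times_{N(P)}N(Q).$$
Since $N(Q)\to N(P)$ is a monomorphism, the right-hand side is (isomorphic to) a simplicial subset of $\exit{P}{X}$; concretely it consists of those $n$-simplices $\sigma\colon|\Delta^n|\to X$ of $\exit{P}{X}$ whose associated chain $p_0\le\cdots\le p_n$ in $P$ actually lies in $Q$. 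The key point is that this subset is exactly the full sub-simplicial-set of $\exit{P}{X}$ spanned by the vertices lying in $Y$: in an exit-path simplex the $i$-th vertex lies in the stratum $X_{p_i}$ and the whole simplex maps into $X_{p_0}\cup\cdots\cup X_{p_n}$, so all vertices of $\sigma$ lie in $Y = s^{-1}(Q)$ if and only if every $p_i$ lies in $Q$, if and only if $\sigma$ factors through $Y$ and hence defines a simplex of $\exit{Q}{Y}$.

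The second step is then purely formal. If $C_0\subseteq C$ is the simplicial subset of a quasi-category $C$ spanned by those simplices all of whose vertices lie in a fixed set of objects, then $C_0$ is again a quasi-category: an inner horn $\Lambda^n_i\to C_0$ admits a filler $\Delta^n\to C$ because $C$ is a quasi-category, and that filler automatically lands in $C_0$ since its vertices are those of the horn. Moreover $C_0\hookrightarrow C$ is fully faithful, because for objects $a,b$ of $C_0$ the various equivalent models of the mapping space $\mathrm{Map}_C(a,b)$ are built only from simplices of $C$ whose vertices lie among $\{a,b\}$, hence already in $C_0$. Taking $C = \exit{P}{X}$, $C_0 = \exit{Q}{Y}$ and the chosen set of objects to be $Y\subseteq X$ concludes the proof.

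I do not expect any genuine obstacle here: the whole argument is a bookkeeping exercise, in line with the statement being flagged as an easy lemma. The only place that demands a little care is the identification in the first step, where one must use that an exit-path simplex of $X$ is confined to the union of the strata it passes through, so that controlling its vertices controls the entire simplex.
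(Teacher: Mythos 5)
Your proposal is correct, and its first step --- the identification $\exit{Q}{Y}\cong\exit{P}{X}\times_{N(P)}N(Q)$ coming from the fact that $\singularset$ preserves limits and that $Y$ is a union of strata --- is exactly how the paper's proof begins. Where you diverge is in how fully faithfulness is extracted from this pullback. The paper stays at the model-categorical level: it observes that $N(Q)\rightarrow N(P)$ is fully faithful (citing consecutiveness) and that the projection $\exit{P}{X}\rightarrow N(P)$ is a fibration in the Joyal model structure (since $\exit{P}{X}$ is fibrant and $P$ is a poset), so the strict pullback is a homotopy pullback and fully faithfulness is stable under such pullbacks; fibrancy of $\exit{Q}{Y}$ also falls out of this. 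You instead identify the pullback concretely as the full simplicial subset of $\exit{P}{X}$ spanned by the vertices lying in $Y$ --- the key geometric input being that an exit-path simplex is confined to the union of the strata indexed by its chain, so controlling the vertices controls the whole simplex --- and then invoke the elementary combinatorial fact that such a span of a quasi-category is a quasi-category with fully faithful inclusion. Both arguments are sound; yours is more hands-on, avoids having to verify the fibration condition and the stability of fully faithfulness under homotopy pullback, and makes transparent that consecutiveness is not really load-bearing here (indeed $N(Q)\rightarrow N(P)$ is fully faithful for any full subposet, so the paper's argument also applies to an arbitrary union of strata). The paper's argument is shorter to state once the relevant model-categorical facts are taken as known.
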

	
	\begin{proof}
		Since the functor $\singularset$ from topological spaces to simplicial sets preserves limits and since $Y\hookrightarrow X$ is a full inclusion of strata, we get a pullback square
		\begin{equation}\label{pbfullinclusion}
			\begin{tikzcd}
			\exit{Q}{Y} \arrow[r] \arrow[d] \arrow[dr, phantom, "\usebox\pullback" , very near start, color=black] & \exit{P}{X} \arrow[d] \\
			N(Q) \arrow[r] & N(P)
		\end{tikzcd}
		\end{equation}
		 of simplicial sets. By \cite[Theorem A.6.4, (1)]{lurie2017higher} the functor $\exit{P}{X}\rightarrow N(P)$ is an inner fibration. Using the pullback square (\ref{pbfullinclusion}), one sees that the map $\exit{Q}{Y}\rightarrow N(Q)$ is also an inner fibration, which implies that $\exit{Y}{Y}$ is an $\infty$-category. Moreover, since the inclusion is full, we know that the functor $N(Q)\rightarrow N(P)$ is fully faithful, and thus we may conclude again by (\ref{pbfullinclusion}).
	\end{proof}
	
	Recall that for a proper conically smooth fiber bundle $\pi: L\rightarrow X$, we define the \textit{fiberwise cone} of $\pi$ as the pushout
	\begin{equation}\label{fiberwisecone}
		\begin{tikzcd}
			L \arrow[d] \arrow[r] & L\times\rnum_{\geq 0} \arrow[d] \\
			X \arrow[r] & C(\pi) \arrow[ul, phantom, "\usebox\pushout" , very near start, color=black]
		\end{tikzcd}
	\end{equation}
	taken in the category of conically smooth stratified spaces (see \cite[Example 3.6.3]{ayala2017local}). By definition, we get a new fiber bundle $C(\pi)\rightarrow X$ whose fibers are isomorphic to basics. We now show how to compute the exit paths of $C(\pi)$ in terms of $L$ and $X$.
	
	\begin{lemma}\label{exitcone}
		Let $\pi: L\rightarrow X$ be a proper conically smooth fiber bundle. Then the commutative square 
		\begin{equation*}
			\begin{tikzcd}
				\exit{}{L} \arrow[d] \arrow[r] & \exit{}{L\times\rnum_{\geq 0}} \arrow[d] \\
				\exit{}{X} \arrow[r] & \exit{}{C(\pi)} 
			\end{tikzcd}
		\end{equation*}
		in $\infcat$ induced by (\ref{fiberwisecone}) is a pushout.
	\end{lemma}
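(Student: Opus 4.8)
The plan is to reduce the statement, by descent and a product argument, to the case where $X$ is a point, in which it becomes the computation of the exit-path category of an open cone. The key input for the first reduction is that the functor $Y\mapsto\exit{}{Y}$, from conically stratified spaces to $\infcat$, sends open covers to colimit diagrams: if $\{U_\alpha\}$ is an open cover of $Y$ with \v{C}ech nerve $U_\bullet$, then the canonical map $\colim_{\Delta\op}\exit{}{U_\bullet}\to\exit{}{Y}$ is an equivalence. This is ultimately a consequence of the generalized van Kampen theorem for $\singularset$ (open covers go to colimit diagrams in $\spaces$) combined with the presentation $\exit{Q}{Y}=N(Q)\times_{\sing{Q}}\sing{Y}$ and the identification $\infcat_{/S}\simeq\Fun(S,\infcat)$ for a space $S$, under which pulling back $N(Q)$ along $\sing{Y}\to\sing{Q}$ corresponds to restriction of functors and hence preserves colimits; alternatively it can be extracted from \cite[Appendix A]{lurie2017higher} or deduced from descent for constructible sheaves. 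Granting this, pick an open cover $\{U_\alpha\}$ of $X$ trivializing $\pi$ (finite intersections are again trivializing). Since the fiberwise cone is local over $X$, restricting the square \eqref{fiberwisecone} over each $U_\alpha$, and over each finite intersection, yields the analogous square for $\pi|_{U_\alpha}$, and \eqref{fiberwisecone} is recovered as the colimit over $\Delta\op$ of those squares. As a colimit of pushout squares in $\infcat$ is again a pushout square, it is enough to prove the lemma for a trivial bundle $\mathrm{pr}\colon Z\times U\to U$ with $Z$ a compact conically smooth stratified space.

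For $\mathrm{pr}$ one has a canonical isomorphism $C(\mathrm{pr})\cong C(Z)\times U$. Since $\singularset$, nerves of posets, and pullbacks of simplicial sets all commute with finite products, $\exit{}{-}$ sends products to products; therefore the square attached to $\mathrm{pr}$ is obtained from the one attached to $Z\to\ast$ by applying $(-)\times\exit{}{U}$. Because $\infcat$ is cartesian closed, $(-)\times\exit{}{U}$ preserves pushouts, and we are reduced to the case $X=\ast$, $L=Z$.

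Now $C(\pi)=C(Z)$. Both $\rnum_{\geq 0}$ and its Alexandroff stratifying poset $\{0<1\}$ are weakly contractible, so $\exit{}{\rnum_{\geq 0}}\simeq N(\{0<1\})=\Delta^1$, whence $\exit{}{Z\times\rnum_{\geq 0}}\simeq\exit{}{Z}\times\Delta^1$, the map out of $\exit{}{Z}$ being the inclusion of the fibre over $0\in\Delta^1$ (the cone-point stratum). The square in question thus reads $\Delta^0\leftarrow\exit{}{Z}\rightarrow\exit{}{Z}\times\Delta^1$, and we must identify its pushout with $\exit{}{C(Z)}$. On one side, $\exit{}{C(Z)}$ is the left cone $\exit{}{Z}^{\triangleleft}$ (the exit-path category of an open cone; see \cite[Appendix A]{lurie2017higher} or \cite{ayala2017local}). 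On the other, for any $\infty$-category $\mathcal{C}$ one has $\mathcal{C}^{\triangleleft}\simeq\Delta^0\sqcup_{\mathcal{C}\times\{0\}}(\mathcal{C}\times\Delta^1)$: indeed $\mathcal{C}\times\{0\}\to\mathcal{C}\times\Delta^1$ is a monomorphism and the Joyal model structure is left proper, so this pushout is computed naively, and collapsing $\mathcal{C}\times\{0\}$ in $\mathcal{C}\times\Delta^1$ is a Joyal equivalence onto $\mathcal{C}^{\triangleleft}$ (equivalently, both are the total space of the cartesian fibration over $\Delta^1$ with transition functor $\mathcal{C}\to\Delta^0$). Unwinding the induced maps, these two identifications match the pushout square with the square attached to $Z\to\ast$, which proves the lemma.

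The real content is the codescent property of $\exit{}{-}$ used in the first step. Although it follows from classical inputs, some care is required in passing through the strict simplicial models — in particular in checking that pulling the van Kampen colimit back along $N(Q)\to\sing{Q}$ is again a colimit, which is exactly the point where $\infcat_{/S}\simeq\Fun(S,\infcat)$ (valid since $S$ is a space) must be invoked, as $\infcat$ does not have universal colimits. A more direct but no easier route would be to prove that $\exit{}{C(\pi)}\to\exit{}{\rnum_{\geq 0}}\simeq\Delta^1$, induced by the radial projection $C(\pi)\to\rnum_{\geq 0}$, is a cartesian fibration with transition functor $\pi_{\ast}\colon\exit{}{L}\to\exit{}{X}$ — its total space being precisely the pushout in the statement — which requires a local analysis of exit paths near the cone locus $X\subset C(\pi)$, where $C(\pi)$ is the open mapping cylinder of $\pi$.
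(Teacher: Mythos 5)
Your proof is correct and follows essentially the same route as the paper: Van Kampen for exit paths to reduce to a trivializing cover, compatibility of $\exitpath$ with finite products to reduce to $X=\ast$, and then the identification of $\exit{}{C(Z)}$ with the left cone on $\exit{}{Z}$. The only difference is that you carry out by hand (via the $\diamond$-versus-$\star$ comparison) the final cone computation, which the paper delegates to \cite[Lemma 6.1.4]{ayala2017local}, and you assert the existence of a trivializing cover directly where the paper first reduces to basics and invokes \cite[Corollary 7.1.4]{ayala2017local}.
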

	
	\begin{proof}
		By the Van Kampen theorem for exit paths \cite[Theorem A.7.1]{lurie2017higher}, we may assume that $X$ is a basic. Thus, by \cite[Corollary 7.1.4]{ayala2017local}, we may also assume that $\pi$ is a trivial bundle. Since $\exitpath$ commutes with finite products, we may assume that $X = \ast$, and hence we are only left to prove that for any compact conically smooth space $L$ the square
		\begin{equation*}
			\begin{tikzcd}
				\exit{}{L} \arrow[d] \arrow[r] & \exit{}{L\times\rnum_{\geq 0}} \arrow[d] \\
				\Delta^0 \arrow[r] & \exit{}{C(L)} 
			\end{tikzcd}
		\end{equation*}
		is a pushout in $\infcat$. This follows from \cite[Lemma 6.1.4]{ayala2017local}.
	\end{proof}
	
	We will also need to use the \textit{unzip} and \textit{link} construction, as defined in \cite[Definition 7.3.11]{ayala2017local}. By \cite[Proposition 7.3.10]{ayala2017local}, for any proper constructible embedding $X\hookrightarrow Y$ we have a pullback square
    \begin{equation}\label{blowingup}
		\begin{tikzcd}
			\link{X}{Y} \arrow[d, "{\pi_{X}}"] \arrow[r, hook]  \arrow[dr, phantom, "\usebox\pullback" , very near start, color=black] & \unzip{X}{Y} \arrow[d] \\
			X \arrow[r, hook] & Y.
		\end{tikzcd}
	\end{equation}
	Here $\unzip{X}{Y}$ is a conically smooth manifold with corners whose interior is identified with $Y\setminus X$, and $\unzip{X}{Y}\rightarrow Y$ and $\pi_{X} : \link{X}{Y}\rightarrow X$ are proper constructible bundles. 
	
	\begin{example}
		To get a feeling of how $\unzip{X}{Y}$ works, one may think of it as a generalization of the spherical blow-up (see \cite{arone2010functoriality}). More precisely, when $Y$ is a smooth manifold stratified with a closed submanifold $X$ and its open complement, then the unzip of $X\hookrightarrow Y$ coincides with the spherical blow-up of $X$ in $Y$, and the link is diffeomorphic to the boundary of any normalized tubular neighbourhood of $X$ in $Y$. 
		
		For a visual representation of the unzip construction, we refer to \cite[Figure 7.3.1]{ayala2017local}.
	\end{example}
	
	The link of a proper constructible embedding is used to provide tubular neighbourhoods in the stratified setting. In \cite[Proposition 8.2.5]{ayala2017local}, the authors show that there is a conically smooth map
	\begin{equation}\label{tubneigh}
		C(\pi_X) \hookrightarrow Y
	\end{equation}
	under $X$ which is a refinement onto its image and whose image is open in $Y$. Here we are using the same notations as in (\ref{blowingup})). Denote by $\reallywidetilde{\link{X}{Y}}\times\rnum_{>0}$ and $\reallywidetilde{C(\pi_X)}$ the respective refinements of $\link{X}{Y}\times\rnum_{>0}$ and $C(\pi_X)$ through the embedding (\ref{tubneigh}).
	
	\begin{corollary}\label{tubularcollar}
		Let $X\hookrightarrow Y$ be a proper conically smooth constructible embedding. Then the square
		$$
		\begin{tikzcd}
			\exit{}{\reallywidetilde{\link{X}{Y}}\times\rnum_{>0}} \arrow[r] \arrow[d] & \exit{}{Y\setminus X} \arrow[d] \\
			\exit{}{\reallywidetilde{C(\pi_X)}} \arrow[r] & \exit{}{Y}
		\end{tikzcd}
		$$
		is a pushout in $\infcat$.
	\end{corollary}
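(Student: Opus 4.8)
The plan is to realise the square as an instance of the Van Kampen theorem for exit paths applied to a two-element open cover of $Y$. Write $U := Y\setminus X$ and let $V\subseteq Y$ be the image of the embedding (\ref{tubneigh}). Since $X\hookrightarrow Y$ is a proper embedding it is in particular closed, so $U$ is open; $V$ is open by the statement of (\ref{tubneigh}), and it contains $X$ because $X$ is the image of the cone section of $C(\pi_X)$. Hence $\{U,V\}$ is an open cover of $Y$, and by construction the open substratified spaces $V$ and $U\cap V$ of $Y$ are precisely the refinements $\reallywidetilde{C(\pi_X)}$ and $\reallywidetilde{\link{X}{Y}}\times\rnum_{>0}$ appearing in the statement: unwinding the pushout (\ref{fiberwisecone}) defining the fiberwise cone, the complement in $C(\pi_X)$ of its cone section is exactly $\link{X}{Y}\times\rnum_{>0}$, and (\ref{tubneigh}), being a refinement onto its image, carries it onto $U\cap V = V\setminus X$.

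With these identifications the four maps in the square are exactly the functors on exit path $\infty$-categories induced by the open inclusions $U\cap V\hookrightarrow U$, $U\cap V\hookrightarrow V$, $U\hookrightarrow Y$ and $V\hookrightarrow Y$; moreover all four spaces are conically stratified ($Y$ is conically smooth and the other three are open in conically smooth spaces), so by \cite[Theorem A.6.4]{lurie2017higher} their exit path simplicial sets are genuine $\infty$-categories. It then remains to apply the Van Kampen theorem for exit paths — the same statement invoked in the proof of \cref{exitcone} — to the cover $\{U,V\}$: for a two-element cover it expresses $\exit{}{Y}$ as the colimit in $\infcat$ of the span $\exit{}{U}\leftarrow\exit{}{U\cap V}\rightarrow\exit{}{V}$, which is the pushout asserted in the statement.

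The only step that is more than routine is matching the stratified open subspaces $V$ and $U\cap V$ of $Y$ with the refinements $\reallywidetilde{C(\pi_X)}$ and $\reallywidetilde{\link{X}{Y}}\times\rnum_{>0}$, together with the compatibility of the inclusions among them; this amounts to spelling out the refinement data furnished by \cite[Proposition 8.2.5]{ayala2017local} and carries no real difficulty, whereas everything else is a direct application of Van Kampen.
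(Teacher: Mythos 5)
Your proposal is correct and is exactly the argument the paper has in mind: the paper dispatches this corollary as ``a straightforward application of Van Kampen theorem,'' namely to the open cover of $Y$ by $Y\setminus X$ and the image of the tubular-neighbourhood embedding (\ref{tubneigh}), with intersection $\reallywidetilde{\link{X}{Y}}\times\rnum_{>0}$. Your careful identification of the cover and its intersection with the refined spaces in the statement is precisely the bookkeeping the paper leaves implicit.
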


    \begin{proof}
    	This follows immediately by the Van Kampen theorem for exit paths $\infty$-categories in \cite[Theorem A.7.1]{lurie2017higher}. 
    \end{proof}
	
	\begin{remark}\label{tubneighstrata}
		Notice that, for the existence of tubular neighbourhoods, one may relax the assumption of properness for a constructible embedding $i:X\hookrightarrow Y$ to just requiring that there is a factorization
		$$
		\begin{tikzcd}
			& Y' \arrow[dr, "j", hook] & \\
			X \arrow[ur, "i'", hook] \arrow[rr, "i"] &  & Y
		\end{tikzcd}
		$$
		where $i'$ is a proper constructible embedding and $j$ is a conically smooth open embedding. For example, if $P$ is the stratifying poset of $Y$ and $X = Y_{\alpha}$ for some $\alpha\in P$, one may pick $Y' = Y_{\geq \alpha}$ and thus get a tubular neighbourhood of $Y_p$.
	\end{remark}
	
	We are now ready to prove the main result of the section.
	
	\begin{proposition}\label{finexit}
		Let $X$ be any compact conically smooth stratified space. Then $\exit{}{X}$ is a finite $\infty$-category.
	\end{proposition}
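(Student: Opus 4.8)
The plan is to induct on the depth $d=\dpt{X}$, cutting $X$ by means of \cref{tubularcollar} into its depth-zero locus and a tubular neighbourhood of the complementary singular set, and then using \cref{exitcone} to deal with the latter. (Note that $\exit{}{X}$ is an $\infty$-category, since $X$ is conically stratified, so all the pushouts below live in $\infcat$.) Let $U\subseteq X$ be the open set of points of depth $0$ and put $Z:=X\setminus U$. Since the depth is constant along each stratum and does not decrease when one passes to a stratum lying in the closure of a given one, $Z$ is a union of strata and is closed; being closed in a compact space, $Z$ is compact and $Z\hookrightarrow X$ is a proper constructible embedding. A local computation in a basic — equivalently, iterating the depth drop of the unzip construction recalled in the introduction — shows $\dpt{Z}\leq d-1$. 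Finally, the unzip construction identifies $U=X\setminus Z$ with the interior of the compact conically smooth manifold with corners $\unzip{Z}{X}$. As no point of $U$ has positive depth, the stratifying poset of $U$ is an antichain and $\exit{}{U}\simeq\sing{U}$, which has the homotopy type of $\unzip{Z}{X}$, hence of a finite CW complex; by \cref{finssetfininfcat} and \cref{fingpds} this is a finite $\infty$-groupoid. This already disposes of the base case $d=0$ (where $Z=\emptyset$).

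For the inductive step, apply \cref{tubularcollar} to $Z\hookrightarrow X$: writing $\reallywidetilde{C(\pi_{Z})}$ for the open tubular neighbourhood of $Z$ provided by (\ref{tubneigh}) and recalling that $\reallywidetilde{C(\pi_{Z})}\setminus Z=\reallywidetilde{\link{Z}{X}}\times\rnum_{>0}$, we obtain a pushout in $\infcat$
$$
\begin{tikzcd}
\exit{}{\reallywidetilde{\link{Z}{X}}\times\rnum_{>0}} \arrow[r] \arrow[d] & \exit{}{U} \arrow[d] \\
\exit{}{\reallywidetilde{C(\pi_{Z})}} \arrow[r] & \exit{}{X}.
\end{tikzcd}
$$
Since $\infcat^{f}$ is closed under pushouts, it suffices to prove the three remaining corners are finite. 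The term $\exit{}{U}$ is finite by the first paragraph. The space $\reallywidetilde{\link{Z}{X}}\times\rnum_{>0}$ is an open subspace of $U$, hence has depth $0$, so $\reallywidetilde{\link{Z}{X}}$ is a compact boundaryless manifold; as $\exitpath$ commutes with finite products and $\rnum_{>0}$ has contractible exit paths, $\exit{}{\reallywidetilde{\link{Z}{X}}\times\rnum_{>0}}\simeq\exit{}{\reallywidetilde{\link{Z}{X}}}\simeq\sing{\reallywidetilde{\link{Z}{X}}}$ is finite for the same reason as in the base case.

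It remains to show $\exit{}{\reallywidetilde{C(\pi_{Z})}}$ is finite, and here the inductive hypothesis finally enters. The refined tubular neighbourhood $\reallywidetilde{C(\pi_{Z})}$ is the fiberwise cone of the bundle $\reallywidetilde{\link{Z}{X}}\to Z$, so \cref{exitcone} presents $\exit{}{\reallywidetilde{C(\pi_{Z})}}$ as the pushout of
$$
\exit{}{Z}\;\longleftarrow\;\exit{}{\reallywidetilde{\link{Z}{X}}}\;\longrightarrow\;\exit{}{\reallywidetilde{\link{Z}{X}}\times\rnum_{\geq 0}}.
$$
Now $Z$ is compact conically smooth of depth $<d$, so $\exit{}{Z}$ is finite by induction; $\exit{}{\reallywidetilde{\link{Z}{X}}}$ is finite by the previous paragraph; and $\exit{}{\reallywidetilde{\link{Z}{X}}\times\rnum_{\geq 0}}\simeq\exit{}{\reallywidetilde{\link{Z}{X}}}\times\Delta^{1}$, which is finite because the functor $(-)\times\Delta^{1}$ preserves pushouts (being a left adjoint, as $\infcat$ is cartesian closed) and carries each of $\emptyset$, $\Delta^{0}$, $\Delta^{1}$ into $\infcat^{f}$, hence preserves $\infcat^{f}$. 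Therefore $\exit{}{\reallywidetilde{C(\pi_{Z})}}$, and with it $\exit{}{X}$, is finite.

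The genuine content — and the main obstacle — is the geometry underlying the first paragraph: establishing via the unzip construction that $Z$ is a compact conically smooth space of strictly smaller depth and that $U$ is the interior of a compact manifold with corners, together with the (routine but not formal) verification that $\reallywidetilde{C(\pi_{Z})}$ really is a fiberwise cone, so that \cref{exitcone} may be applied to it. Granting these inputs from the theory of conically smooth structures, the finiteness of $\exit{}{X}$ follows formally from closure of $\infcat^{f}$ under pushouts.
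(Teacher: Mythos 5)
Your overall strategy coincides with the paper's: induct on depth, split $X$ via \cref{tubularcollar} into the depth-zero locus $U$ and a tubular neighbourhood of $Z=X_{>0}$, handle $U$ through the unzip construction, and handle the tubular neighbourhood through \cref{exitcone}. The base case, the treatment of $\exit{}{U}$, and the treatment of $\exit{}{\reallywidetilde{\link{Z}{X}}\times\rnum_{>0}}$ are all essentially as in the paper. The gap is in the third corner. You assert that $\reallywidetilde{C(\pi_Z)}$ is the fiberwise cone of ``the bundle $\reallywidetilde{\link{Z}{X}}\to Z$'' and apply \cref{exitcone} to it. But $\reallywidetilde{\link{Z}{X}}$ carries the trivial stratification (it sits inside the manifold $X_0$), while $Z$ is in general nontrivially stratified; a conically smooth fiber bundle is locally a product $U\times F$ over basics $U$ of the base, which is impossible when the total space is trivially stratified and the base is not (the fibers of $\pi_Z$ genuinely change as one moves between strata of $Z$). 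Concretely, the projection does not even induce a functor $\exit{}{\reallywidetilde{\link{Z}{X}}}\simeq\sing{\link{Z}{X}}\to\exit{}{Z}$, since $\exit{}{\link{Z}{X}}\to\exit{}{Z}$ does not factor through the groupoidification of its source unless $\exit{}{Z}$ is a groupoid; so the pushout you write down for $\exit{}{\reallywidetilde{C(\pi_Z)}}$ is not defined. This is exactly the point you flag as a ``routine but not formal verification'', and it is where the argument breaks --- it is also the only place where the inductive hypothesis can actually be brought to bear on the singular part.

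The paper's route around this: apply \cref{exitcone} to the honest fiberwise cone $C(\pi_Z)$ of the honest link bundle, whose total space $\link{Z}{X}$ is compact of depth strictly less than $\dpt{X}$ and therefore has finite exit path $\infty$-category by the inductive hypothesis (not because it is a manifold --- in general it is a nontrivially stratified space); this gives finiteness of $\exit{}{C(\pi_Z)}$. One then passes to the refinement: the tubular neighbourhood embedding is only a refinement onto its image, and by \cite[Proposition 1.2.13]{ayala2017local} the induced functor $\exit{}{C(\pi_Z)}\to\exit{}{\reallywidetilde{C(\pi_Z)}}$ is a localization; the class of inverted morphisms is identified with the morphisms of the full subcategory $\exit{}{\link{Z}{X}\times\rnum_{>0}}$ using \cref{consecutiveff}, and one concludes by \cref{finloc}. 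Some version of this two-step argument --- compute the exit paths of the naturally stratified cone first, then localize --- is needed to repair your proof.
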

	
	\begin{proof}
		Since $X$ is compact, $X$ is finite dimensional, and hence also has finite depth. We then argue by induction on $\dpt{X} = k$.
		
		If $k = 0$, it is well known that $\exit{}{X} = \sing{X}$ is a finite $\infty$-groupoid. For example, this follows by Van Kampen theorem \cite[Theorem A.3.1]{lurie2017higher} and the existence of finite good covers for $X$.
		
		Assume now that $k$ is positive. Denote by $X_0$ the union of strata of minimal depth, and by $X_{>0}$ its complement in $X$. One sees that $X_{>0}\hookrightarrow X$ is a proper constructible embedding, and hence by \cref{tubularcollar} we get a pushout 
		$$
		\begin{tikzcd}
			\exit{}{\reallywidetilde{\link{>0}{X}}\times\rnum_{>0}} \arrow[r] \arrow[d] & \exit{}{X_0} \arrow[d] \\
			\exit{}{\reallywidetilde{C(\pi_{>0})}} \arrow[r] & \exit{}{X} \arrow[ul, phantom, "\usebox\pushout" , very near start, color=black].
		\end{tikzcd}
		$$  
		By \cref{exitcone}, to conclude the proof it suffices to show that $\exit{}{\reallywidetilde{\link{>0}{X}}}$, $\exit{}{\reallywidetilde{C(\pi_{>0})}}$, and $\exit{}{X_0}$ are finite. 
		
		Being a closed subset of $X$, the space $X_{>0}$ is compact. By the pullback square (\ref{blowingup}) $\link{>0}{X}$ is compact too. Since the depths of both are stricly less than $\dpt{X}$, by the inductive hypothesis we get that $X_{>0}$ and $\link{>0}{X}$ both have finite exit paths $\infty$-categories. Notice that we have a stratified embedding $$\reallywidetilde{\link{>0}{X}}\times\rnum_{>0}\hookrightarrow X_0$$ and $X_0$ is a smooth manifold. In particular, we see that the stratification on $\reallywidetilde{\link{>0}{X}}$ is trivial. Thus $\exit{}{\reallywidetilde{\link{>0}{X}}}$ is a localization of $\exit{}{\link{>0}{X}}$ at all maps, and by \cref{finloc} we see that $\exit{}{\reallywidetilde{\link{>0}{X}}}$ is finite. 
		
		By \cref{exitcone} and the inductive hypothesis, we also know that $\exit{}{C(\pi_{>0})}$ is finite. Using \cite[Proposition 1.2.13]{ayala2017local}, we see that the canonical functor $$\phi : \exit{}{C(\pi_{>0})}\rightarrow\exit{}{\reallywidetilde{C(\pi_{>0})}}$$ is a localization at the class of exit paths that are inverted by $\phi$. Since the inclusion $C(\pi_{>0})\rightarrow X$ lies under $X>0$, the same argument as above shows that a non-invertible exit path is inverted by $\phi$ if and only if it lies inside $\link{>0}{X}\times \rnum_{>0}\hookrightarrow C(\pi_{>0})$. Notice that $\link{>0}{X}\times \rnum_{>0}\hookrightarrow C(\pi_{>0})$ is a consecutive inclusion of strata. Therefore, by \cref{consecutiveff} the induced functor on exit paths is the inclusion of a full subcategory. This implies that one can identify $\exit{}{\reallywidetilde{C(\pi_{>0})}}$ with the localization of $\exit{}{C(\pi_{>0})}$ at $\exit{}{\link{>o}{X}}\times\rnum_{>0}$. Hence, by \cref{finloc}, $\exit{}{\reallywidetilde{C(\pi_{>0})}}$ is finite as well.
		
		We know that $X_0$ is the interior of the compact manifold with corners $\unzip{>0}{X}$. One can show that the existence of collaring for corners (\cite[Lemma 8.2.1]{ayala2017local}) implies that the inclusion $X_0\hookrightarrow\unzip{>0}{X}$ is a homotopy equivalence. In the proof of \cite[Lemma 2.1.3]{ayala2019stratified} one may find a construction of a homotopy inverse of the inclusion $X_0\hookrightarrow\unzip{>0}{X}$ in the special case where the corner structure is a boundary. However, the construction of such an inverse in the more general case is completely analogous. Hence to conclude the proof it suffices to show that $\sing{\unzip{>0}{X}}$ is finite. This follows by the existence of good covers for manifolds with corners. 
	\end{proof}
	
	\begin{corollary}
		Let $X$ be a finitary conically smooth stratified space (see \cite[Definition 8.3.6]{ayala2017local}). Then $\exit{}{X}$ is a finite $\infty$-category. In particular, if $X$ is the interior of a compact conically smooth manifold with corners, then $\exit{}{X}$ is a finite $\infty$-category.
	\end{corollary}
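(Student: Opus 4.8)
The plan is to reduce both assertions to \cref{finexit} by compactifying. From the theory of handlebodies and finitary stratified spaces developed in \cite[Section 8]{ayala2017local}, a finitary conically smooth stratified space $X$ can be presented as the interior of a compact conically smooth stratified space with boundary $\bar X$; the underlying conically smooth stratified space of $\bar X$ is compact, so \cref{finexit} applies to it and tells us that $\exit{}{\bar X}$ is a finite $\infty$-category.

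I would then run exactly the argument in the last paragraph of the proof of \cref{finexit}. The existence of a collar for the boundary of $\bar X$ (\cite[Lemma 8.2.1]{ayala2017local}), together with the reasoning of \cite[Lemma 2.1.3]{ayala2019stratified}, produces a conically smooth --- hence stratified --- deformation retraction of $\bar X$ onto $X$, so that the inclusion $X\hookrightarrow\bar X$ is a stratified homotopy equivalence. Since $\exit{}{-}$ is built out of $\singularset$, which is homotopy invariant, such an equivalence induces an equivalence of $\infty$-categories $\exit{}{X}\xrightarrow{\simeq}\exit{}{\bar X}$. Combining this with the previous paragraph, $\exit{}{X}$ is finite.

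For the ``in particular'', if $X$ is the interior of a compact conically smooth manifold with corners $\bar X$, then $X$ is trivially stratified, so $\exit{}{X}=\sing{X}$, and the collaring argument above identifies $\sing{X}$ with $\sing{\bar X}$, which is a finite $\infty$-groupoid by the existence of finite good covers of compact manifolds with corners --- exactly as used at the end of the proof of \cref{finexit}. One could alternatively just observe that the interior of a compact conically smooth manifold with corners is finitary and invoke the first part.

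The step I expect to need the most care --- although it is ultimately routine --- is checking that the collar-induced retraction is genuinely a map of stratified spaces (over the appropriate poset, with $\bar X\times[0,1]$ carrying the product stratification) and that $\exitpath$ is invariant under homotopies through such maps; this is precisely where one imports \cite[Lemma 2.1.3]{ayala2019stratified} and uses that $\exit{}{-}$ is defined via the manifestly homotopy-invariant functor $\singularset$. Beyond this, no geometric input is required other than \cref{finexit} itself.
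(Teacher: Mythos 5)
Your route is genuinely different from the paper's, and it has a gap at its very first step. The paper argues by induction over the defining presentation of a finitary space: basics have finite exit-path $\infty$-categories by \cref{finexit} and \cref{exitcone}, and the class of spaces with finite $\exit{}{-}$ is closed under collar gluings by Van Kampen. You instead want to compactify, but the claim that every finitary conically smooth stratified space is the interior of a compact conically smooth stratified space with boundary is not something you can cite from \cite[Section 8]{ayala2017local}: what is proved there (Theorem 8.3.10 (1)) is the \emph{converse} direction, that interiors of compact conically smooth manifolds with corners are finitary. Since ``finitary'' is defined by closure of the basics under finite collar gluings, establishing your compactification statement would itself require an induction over collar gluings --- at which point one may as well prove finiteness of $\exit{}{-}$ directly along that induction, which is what the paper does.

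Even granting a compactification $\bar X$, the second step is not correct as stated. For \cref{finexit} to apply, $\bar X$ must be a compact conically smooth stratified space, hence stratified so that its boundary faces form separate strata; with that stratification the inclusion $X\hookrightarrow\bar X$ is not a stratified homotopy equivalence over a common poset, and $\exit{}{\bar X}$ is genuinely larger than $\exit{}{X}$ (already for $\bar X=[0,1]$ the category $\exit{}{\bar X}$ has three objects up to equivalence, while $\exit{}{X}\simeq\Delta^0$). The collar gives at best that $\exit{}{X}$ is a localization of $\exit{}{\bar X}$ at the exit paths emanating from the boundary strata, and one then needs the analogues of \cref{consecutiveff} and \cref{finloc} to conclude finiteness --- exactly the extra work carried out inside the proof of \cref{finexit} for $\reallywidetilde{C(\pi_{>0})}$, which your write-up elides. (In the last paragraph of that proof this issue does not arise because $X_0$ and $\unzip{>0}{X}$ are both treated as trivially stratified, so only $\singularset$ is involved.) Your treatment of the ``in particular'' clause, on the other hand, is fine and agrees with the paper.
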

	
	\begin{proof}
		By \cref{finexit} and \cref{exitcone}, the class of conically smooth spaces with finite exit paths $\infty$-category contains all basics. Thus it suffices to show that it is closed under taking collar glueings. Suppose there is a collar glueing $f\colon Y\rightarrow[-1, 1]$ such that $f^{-1}([-1,1))$  $f^{-1}((-1, 1])$ and $f^{-1}(0)$ are finitary. Then we get an open covering of $Y$ given by $f^{-1}([-1,1))$  $f^{-1}((-1, 1])$ and $\rnum \times f^{-1}(0)$, which implies that $\exit{}{Y}$ is finite by Van-Kampen.
		
		The last part of the statement follows by \cite[Theorem 8.3.10 (1)]{ayala2017local}.
	\end{proof}

	\section{Homotopy invariance and exodromy with general coefficients}
	
	In this section we explain how to use \cite{volpe2021six} to prove \textit{homotopy invariance} and the \textit{exodromy equivalence} for (formally) contructible sheaves (see \cref{constrdef}) valued in stable and bicomplete $\infty$-categories (\cref{exodromy}). 
	
	A proof of homotopy invariance for constructible sheaves with presentable coefficients can be found in \cite{Haine2020TheHO}. Our argument in \cref{constrdef} follows precisely the one in \cite{Haine2020TheHO}. Nevertheless, we will try to quickly outline the main steps to convince the reader that all the results in \cite{Haine2020TheHO}, after \cite{volpe2021six}, generalize to the setting of stable bicomplete coefficients, at least if we restrict ourselves to locally compact Hausdorff spaces.
	
	The exodromy equivalence was first proven by Lurie in \cite[Theorem A.9.3]{lurie2017higher}, and later generalized in \cite{porta2022topological}. For constructible sheaves of $\infty$-groupoids on conically smooth stratified spaces, this was proven in \cite[Theorem 1.2.5]{ayala2017local}. Here we stick with conically smooth stratified spaces, and we provide a short argument that works for constructible sheaves in stable and bicomplete $\infty$-categories. This is essentially a combination of the homotopy invariance and \cite[Lemma 4.5.1]{ayala2017local}.
	
	With these at hand, we show that global sections of constructible sheaves on compact conically smooth stratified spaces are dualizable (\cref{globsecconstrshcpt}).
	
	\subsection{Homotopy invariance of constructible sheaves}
	
	From now on, all $\infty$-categories appearing as coefficients for sheaves will be assumed to be stable and bicomplete, all topological spaces locally compact Hausdorff and all posets Noetherian. The next lemma shows in particular that the stratifying poset of any $C^0$-stratified space (see \cite[Definition 2.1.15]{ayala2017local}) is Noetherian. Since any conically smooth stratified space is by definition $C^0$-stratified, we see that all our stratified spaces of interest will have Noeterian stratifying posets.
	
	\begin{lemma}
		Let $X\rightarrow P$ be a $C^0$-stratified space. Then $P$ is locally finite, and therefore Noetherian.
	\end{lemma}

    \begin{proof}
    	Recall that, by \cite[Lemma 2.2.2]{ayala2017local}, any $C^0$-stratified space admits a basis given by its open subsets isomorphic as stratified spaces to one of the type $\mathbb{R}^n\times C(Z)$, where $Z$ is a compact $C^0$-stratified space. Therefore, it will suffice $P$ is finite when $X$ is compact. We prove this by induction on the depth of $X$.
    	
    	When the depth of $X$ is $0$, it follows by \cite[Lemma 2.22]{nocera2021whitney} that $P$ is discrete. Since $X$ compact, it can have only a finite number of connected components, and therefore $P$ has to be finite.
    	
    	Assume that the depth of $X$ is $n$. Since $X$ is compact and $C^0$-stratified, one may find a finite cover of $X$ by open subsets isomorphic as stratified spaces to $\mathbb{R}^n\times C(Z)$, where $Z$ is a compact $C^0$-stratified. But the depth of $Z$ is smaller than $n$, and thus by inductive assumption its stratifying poset has to be finite. Therefore, $P$ has to be finite. 
    \end{proof}
	
	\begin{definition}\label{constrdef}
		Let $X$ be any locally compact Hausdorff topological space, and let $a:X\rightarrow\ast$ be the unique map. We say that a sheaf $F\in\Sh{X}{\C}$ is \textit{constant} if there exists an object $M\in\C$ and an equivalence $F\simeq\pb{a}_{\C}M$. We say that $F$ is \textit{locally constant} if there exists an open covering $\{U_i\}_{i\in I}$ of $X$ such that $F_{|_{U_i}}$ is constant.
		
		Let $X\rightarrow P$ be a stratified space. We say that a sheaf $F\in\Sh{X}{\C}$ is \textit{formally constructible} if for any $\alpha\in P$ the restriction of $F$ to the stratum $X_{\alpha}$ is locally constant. 
		
		Assume now that $\C$ admits a closed symmetric monoidal structure, and denote by $\C^{\text{dual}}$ the full subcategory of $\C$ spanned by dualizable objects. We say that $F$ is \textit{constructible} if $F$ is formally constructible and each stalk of $F$ belongs to $\C^{\text{dual}}$. 
		
		We denote by $\fcSh{X}{C}$ and $\cSh{X}{\C}$ the full subcategories of $\Sh{X}{\C}$ spanned respectively by formally constructible and constructible sheaves. Dually, we define formally constructible and constructible cosheaves on $X$ as $\fccoSh{X}{\cate{C}}\coloneqq\fcSh{X}{\cate{C}\op}\op$ and $\ccoSh{X}{\cate{C}}\coloneqq\cSh{X}{\cate{C}\op}\op$.
	\end{definition}

	In this paper we only deal with constructible sheaves with respect to a specified stratification. Therefore, we will take the liberty of omitting the stratifying poset from our notation for constructible sheaves.
	
	We first recall the proof of the homotopy invariance of constructible sheaves. For the reader's convenience, we will also recall the definition of stratified homotopy equivalence.
	
	\begin{definition}
		Let $X\rightarrow P$ and $Y\rightarrow Q$ be stratified spaces, and let $[0,1]\subseteq\rnum$ be the closed interval, considered as a stratified space with a single stratum. A \textit{stratified homotopy} is a map of stratified spaces $H: X\times[0,1]\rightarrow Y$. 
		
		We say that a stratified map $f : X \rightarrow Y$ is a \textit{stratified homotopy equivalence} if there exists a stratified map $g: Y\rightarrow X$ and stratified homotopies $H:X\times[0,1]\rightarrow X$, $K:Y\times[0,1]\rightarrow Y$ such that $H_{|{X\times\{0\}}} = \text{id}_X$, $H_{|{X\times\{1\}}} = gf$, $K_{|{Y\times\{0\}}} = \text{id}_Y$ and $K_{|{Y\times\{1\}}} = fg$.
	\end{definition}
	
	\begin{theorem}[Homotopy invariance]\label{htpyinv}
		Let $X\rightarrow P$ be a stratified space. Let $p : X\times [0,1]\rightarrow X$ be the canonical projection. Then $\pb{p} : \Sh{X}{\C}\rightarrow\Sh{X\times [0,1]}{\C}$ restricts to an equivalence
		$$\fcSh{X}{\C}\simeq\fcSh{X\times [0,1]}{\C}.$$
		As a consequence, if $Y\rightarrow P$ is another stratified space and $f: X\rightarrow Y$ is a stratified homotopy equivalence, then the functor
		$$\pb{f}:\fcSh{Y}{\C}\rightarrow\fcSh{X}{\C}.$$
		is an equivalence.
	\end{theorem}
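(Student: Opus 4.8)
The plan is to reduce everything to showing that $\pb{p}$ lands in formally constructible sheaves and induces an equivalence there; the statement about stratified homotopy equivalences is then purely formal. Thus there are three things to check: (i) $\pb{p}$ preserves formal constructibility; (ii) $\pb{p}\colon\Sh{X}{\C}\to\Sh{X\times I}{\C}$ is fully faithful; (iii) every formally constructible sheaf on $X\times I$ lies in the essential image of $\pb{p}$. Point (i) is immediate: the $\alpha$-stratum of $X\times I$ is $X_{\alpha}\times I$, and $\pb{p}F$ restricted to it is the pullback of the locally constant sheaf $F|_{X_{\alpha}}$ along a projection, hence locally constant. The real content is (ii) and (iii), and for these I would simply transcribe the argument of \cite{Haine2020TheHO}; the only thing worth stressing is that every ingredient it uses --- proper base change, joint conservativity of stalks on locally compact Hausdorff spaces, and the fact that the constant sheaf on a contractible space has sections equal to its value --- is available for arbitrary stable bicomplete coefficients by \cite{volpe2021six}.

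For (ii): since $I$ is compact, $p\colon X\times I\to X$ is proper, so $\pf{p}$ exists and satisfies proper base change, and it suffices to show the unit $F\to\pf{p}\pb{p}F$ is an equivalence for every $F$. As stalks are jointly conservative, I would check this at each $x\in X$. By proper base change for the square with corners $\{x\}\times I,\{x\},X\times I,X$, one has $\pb{x}\pf{p}\pb{p}F\simeq\Sec{\{x\}\times I}{(\pb{p}F)|_{\{x\}\times I}}$; but $(\pb{p}F)|_{\{x\}\times I}$ is the pullback of $F$ along the constant map $\{x\}\times I\to X$ at $x$, hence the constant sheaf with value $F_x$, and $\Sec{I}{-}$ of a constant sheaf returns its value since $I$ is contractible. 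Checking this identification is compatible with the unit shows $F\to\pf{p}\pb{p}F$ is an equivalence, so $\pb{p}$ is fully faithful --- which is as expected, since $p$ admits the sections $s_t\colon X\to X\times I$, $x\mapsto(x,t)$, with $\pb{s_t}\pb{p}\simeq\mathrm{id}$.

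For (iii), the key observation is that if $H$ is formally constructible on $X\times I$, then $H|_{\{x\}\times I}$ is constant for each $x\in X$: indeed $\{x\}\times I$ is contained in the stratum $X_{s(x)}\times I$, on which $H$ is locally constant, so $H|_{\{x\}\times I}$ is locally constant, hence constant as $I$ is connected. Granting this, the counit $\pb{p}\pf{p}H\to H$ is an equivalence by the same stalkwise computation: at $(x,t)$ the left-hand side is $\pb{x}\pf{p}H\simeq\Sec{\{x\}\times I}{H|_{\{x\}\times I}}\simeq H_{(x,t)}$, compatibly with the counit. Moreover $\pf{p}H$ is again formally constructible, since its restriction to $X_{\alpha}$ is $\pf{(p_{\alpha})}(H|_{X_{\alpha}\times I})$ by proper base change, and the pushforward of a locally constant sheaf along the projection $X_{\alpha}\times I\to X_{\alpha}$ is locally constant (this is homotopy invariance for locally constant sheaves, the special case $P=\ast$, and is visible directly since locally on $X_{\alpha}$ the sheaf is constant). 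Hence $H\simeq\pb{p}(\pf{p}H)$ with $\pf{p}H$ formally constructible, giving (iii).

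Finally, for the consequence: given a stratified homotopy equivalence $f\colon X\to Y$, let $g\colon Y\to X$ be a stratified homotopy inverse and $h\colon X\times I\to X$ a stratified homotopy from $g\circ f$ to $\mathrm{id}_X$ (and symmetrically on $Y$). Since $h$ is a stratified map, $\pb{h}$ preserves formal constructibility; and on $\fcSh{X\times I}{\C}$ one has $\pb{s_0}\simeq\pb{s_1}$, because each $\pb{s_t}$ is a left inverse to the equivalence $\pb{p}\colon\fcSh{X}{\C}\to\fcSh{X\times I}{\C}$ of (i)--(iii) and hence equals its inverse. Therefore $(g\circ f)^{\ast}=\pb{s_0}\pb{h}\simeq\pb{s_1}\pb{h}=\mathrm{id}$ on $\fcSh{X}{\C}$, and likewise $(f\circ g)^{\ast}\simeq\mathrm{id}$ on $\fcSh{Y}{\C}$, so $\pb{f}$ is an equivalence with inverse $\pb{g}$. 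The main obstacle here is not conceptual: it is bookkeeping, namely confirming that the six-functor inputs above (proper base change for $p$ and for stratum inclusions, joint conservativity of stalks, and the absence of higher cohomology of constant sheaves on contractible spaces) genuinely hold for general stable bicomplete $\C$ on locally compact Hausdorff spaces --- which is precisely what \cite{volpe2021six} provides. One could alternatively cite \cite{Haine2020TheHO} as a black box and reduce the non-presentable case to the presentable one, but the self-contained stalkwise argument above sidesteps that reduction.
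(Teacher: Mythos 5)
Your overall architecture (full faithfulness of $\pb{p}$ plus essential surjectivity via the counit, then the formal deduction for stratified homotopy equivalences) is the same as the paper's, but the way you verify that the unit and counit are equivalences contains a genuine gap: you check them on stalks and invoke ``joint conservativity of stalks on locally compact Hausdorff spaces''. This is not among the inputs that \cite{volpe2021six} provides, and it fails at the stated level of generality: the theorem is asserted for an arbitrary stratified space $X\rightarrow P$ with $X$ locally compact Hausdorff, and for such $X$ the $\infty$-category $\Sh{X}{\C}$ need not be hypercomplete, so the stalk functors (which factor through the hypercompletion) need not be jointly conservative. The paper is careful to avoid exactly this: in the locally constant case it checks the counit $\pb{p}\pf{p}F\rightarrow F$ \emph{locally on an open cover of $X$} (legitimate by descent, with no hypercompleteness needed), using \cite[Lemma 4.9]{Haine2020TheHO} to produce a cover of $X$ by opens $U$ with $F|_{U\times I}$ constant and base change along $U\hookrightarrow X$ to reduce to the constant case; and for general Noetherian $P$ it replaces stalkwise conservativity by the joint conservativity of the restrictions to the \emph{strata}, which follows from the recollement/gluing property (\cite[Corollary 4.2]{volpe2021six} together with \cite[Lemma 5.19]{Haine2020TheHO}). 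Your argument would be fine under an added hypercompleteness hypothesis (e.g.\ for locally finite-dimensional spaces), but as written it does not prove the theorem in the generality in which it is stated and later used.

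A secondary, smaller point: the step ``the pushforward of a locally constant sheaf along $X_{\alpha}\times I\rightarrow X_{\alpha}$ is locally constant\dots visible directly since locally on $X_{\alpha}$ the sheaf is constant'' is not quite direct. A locally constant sheaf on $X_{\alpha}\times I$ is a priori only constant on opens of $X_{\alpha}\times I$, and producing a cover of $X_{\alpha}$ by opens $U$ with the restriction to $U\times I$ constant requires the compactness of $I$ and a gluing argument along the interval; this is precisely the content of \cite[Lemma 4.9]{Haine2020TheHO}, which the paper cites rather than reproves. Once these two points are repaired --- replace stalkwise checks by checks on sections over an open cover, and quote the interval-gluing lemma --- your proof becomes essentially the paper's.
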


    \begin{proof}
    	We first treat the case of locally constant sheaves, i.e. when $P = \ast$. By \cite[Lemma A.2.9]{lurie2017higher} and \cite[Corollary 5.2]{volpe2021six} we know that $\pb{p}$ is fully faithful. We start by showing that $\pf{p}$ preserves constant sheaves. If $a : X\rightarrow\ast$ and $b : X\times [0,1] \rightarrow \ast$ are the unique maps then, for any object $M\in\C$, the fully faithfulness of $\pb{p}$ implies that we have equivalences
    	\begin{align*}
    		\pf{p}\pb{b}M&\simeq \pf{p}\pb{p}\pb{a}M \\
    		& \simeq \pb{a}M.
    	\end{align*}
        
        Let now $F$ be any locally constant sheaf on $X\times[0,1]$. By \cite[Lemma 4.9]{Haine2020TheHO}, there exists an open cover $\{U_i\}_{i\in I}$ of $X$ such that $F_{|_{U_i\times[0,1]}}$ is constant. Therefore, since $[0, 1]$ is compact, by proper base change (see\cite[Proposition 6.1]{volpe2021six}) we see that $(\pf{p}F)_{|_{U_i}}$ is constant. Hence we see that $\pf{p}$ preserves locally constant sheaves. Thus to conclude we only need to show that, for any locally constant sheaf $F$ on $X\times [0, 1]$, the counit map $\pb{p}\pf{p}F\rightarrow F$ is an equivalence. 
        
        Again by basechange and \cite[Lemma 4.9]{Haine2020TheHO}, we may reduce to the case when $F\simeq\pb{b}M$ is constant. In this case we have a commutative diagram
        $$
        \begin{tikzcd}[column sep= huge]
        	\pb{p}\pf{p}\pb{b}M \arrow[r, "{\text{counit}_{\pb{b}M}}"] \arrow[d, "\simeq"] & \pb{b}M \arrow[d, "\simeq"] \\
        	\pb{p}\pf{p}\pb{p}\pb{a}M \arrow[r, "{\text{counit}_{\pb{p}\pb{a}M}}"] & \pb{p}\pb{a}M \\
        	\pb{p}\pb{a}M \arrow[u, "{\pb{p}(\text{unit}_{\pb{a}M})}", "\simeq"'] \arrow[ur, "\text{id}"'] & 
        \end{tikzcd}
        $$
        which implies the desired result.
    	
    	Now assume that $P$ is any Noetherian poset. Using base change in a similar way as before, one sees that $\pf{p}$ preserves formally constructible sheaves, and thus we are left to show that for any $F$ constructible $\pb{p}\pf{p}F\rightarrow F$ is an equivalence. By \cite[Corollary 4.2]{volpe2021six} any stable and bicomplete $\infty$-category respects glueing in the sense of \cite[Definition 5.17]{Haine2020TheHO}. Hence \cite[Lemma 5.19]{Haine2020TheHO} implies that the functor given by restricting to the strata of $X$ are jointly conservative. By base change we may thus assume that $F$ is locally constant, in which case the counit is known to be an equivalence by the previous step.
    	
    	The last part of the statement then follows by a standard argument analogous to the proof of \cite[Corollary 3.1]{volpe2021six}.
    \end{proof}

    We now present a couple of useful corollaries of homotopy invariance.
    
    \begin{corollary}\label{htpyinvglobsec}
    	Let $f:X\rightarrow Y$ be a stratified homotopy equivalence, and $F\in\fcSh{Y}{\C}$. The natural map 
    	$$ \Sec{Y}{F}\rightarrow\Sec{X}{\pb{f}F}$$
    	is an equivalence.
    \end{corollary}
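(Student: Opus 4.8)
The plan is to reduce the statement to the homotopy invariance equivalence established in \cref{htpyinv}. First I would recall that, since $f : X \to Y$ is a stratified homotopy equivalence, the functor $\pb{f} : \fcSh{Y}{\C} \to \fcSh{X}{\C}$ is an equivalence of $\infty$-categories by the last part of \cref{htpyinv}. In particular $\pb{f}$ admits an inverse, which must agree with its right adjoint $\pf{f}$ on formally constructible sheaves, so that the unit $F \to \pf{f}\pb{f}F$ is an equivalence for every $F \in \fcSh{Y}{\C}$.

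Next I would express global sections in terms of pushforward to a point. Writing $a_Y : Y \to \ast$ and $a_X : X \to \ast$ for the structure maps, one has $a_Y \circ f = a_X$, hence $\pf{(a_X)} \circ \pf{f} \simeq \pf{(a_Y)}$. The canonical map $\Sec{Y}{F} \to \Sec{X}{\pb{f}F}$ in the statement is precisely the map obtained by applying $\pf{(a_Y)} \simeq \pf{(a_X)}\pf{f}$ to the unit $F \to \pf{f}\pb{f}F$, i.e. it is
$$
\pf{(a_Y)}F \simeq \pf{(a_X)}\pf{f}F \xleftarrow{\ \sim\ } \pf{(a_X)}\pf{f}\pf{f}\pb{f}F \quad\text{— more simply, it is } \Sec{Y}{F}\to \Sec{Y}{\pf{f}\pb{f}F}\simeq \Sec{X}{\pb{f}F}.
$$
Since $F \to \pf{f}\pb{f}F$ is an equivalence by the previous paragraph, applying the functor $\pf{(a_Y)}$ to it yields an equivalence, which is the desired conclusion.

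The only point requiring a little care — and the step I expect to be the mildest obstacle — is checking that the map appearing in the statement really is the one obtained from the unit by applying $\pf{(a_Y)}$, rather than some a priori different natural transformation; this is a routine compatibility of adjunction units with composition of functors, and follows from the fact that $\Sec{X}{\pb{f}F} \simeq \pf{(a_X)}\pb{f}F$ together with the identification $\pf{(a_X)}\pb{f} \simeq \pf{(a_Y)}\pf{f}\pb{f}$ via the counit/unit triangle identities. Alternatively, one can bypass this bookkeeping entirely: the assertion is a purely formal consequence of $\pb{f}$ being an equivalence together with the commuting triangle $a_Y \circ f = a_X$, exactly as in the proof of \cite[Corollary 3.1]{volpe2021six}.
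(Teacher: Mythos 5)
There is a genuine gap in the central step. You claim that, since $\pb{f}\colon\fcSh{Y}{\C}\to\fcSh{X}{\C}$ is an equivalence, its inverse ``must agree with its right adjoint $\pf{f}$ on formally constructible sheaves, so that the unit $F\to\pf{f}\pb{f}F$ is an equivalence''. This does not follow, and is in fact false: the inverse of the \emph{restricted} functor $\pb{f}|_{\fcSh{Y}{\C}}$ is an adjoint inverse internal to the subcategories (concretely it is $\pb{g}$ for a stratified homotopy inverse $g$ of $f$), whereas $\pf{f}$ is computed in the ambient categories of all sheaves and need not carry $\fcSh{X}{\C}$ into $\fcSh{Y}{\C}$ at all. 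For a concrete failure, take $f=i_0\colon X\hookrightarrow X\times I$ the inclusion at $0$ (a stratified homotopy equivalence) and $F=\pb{p}G$ for $p$ the projection and $G$ a nonzero formally constructible sheaf on $X$: then $\pf{f}\pb{f}F\simeq\pf{(i_0)}G$ is supported on $X\times\{0\}$, so it is not formally constructible and the unit $\pb{p}G\to\pf{(i_0)}G$ is not an equivalence. Theorem~\ref{htpyinv} gives you no control over $\pf{f}$ for a general stratified homotopy equivalence $f$; it only identifies the two pullback functors.

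The conclusion is nevertheless correct and can be reached by staying on the left-adjoint side, which is what the paper does: from $b\circ f=a$ one gets $\pb{f}\pb{b}\simeq\pb{a}$ as functors $\C\to\fcSh{X}{\C}$ (constant sheaves are formally constructible), hence $\pb{b}\simeq\eta\pb{a}$ where $\eta$ is the adjoint inverse of $\pb{f}|_{\fcSh{Y}{\C}}$, and passing to right adjoints of these composites yields $\Sec{Y}{F}\simeq\Sec{X}{\pb{f}F}$ for $F$ formally constructible, without ever invoking $\pf{f}$. Alternatively, your route can be repaired by weakening the claim: the unit $F\to\pf{f}\pb{f}F$ induces an equivalence on mapping spaces out of every formally constructible sheaf (since $\pb{f}$ is fully faithful on $\fcSh{Y}{\C}$), and $\Sec{Y}{-}$ is the right adjoint of $\pb{b}$, hence only tests against constant sheaves, which are formally constructible; so applying $\Sec{Y}{-}$ to the unit still gives an equivalence even though the unit itself is not one. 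As written, however, the argument rests on a false assertion and needs this correction.
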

    
    \begin{proof}
    	The commutative triangle 
    	$$
    	\begin{tikzcd}
    		X \arrow[rr, "f"] \arrow[rd, "a"'] & & Y \arrow[ld, "b"] \\
    		& \ast &
    	\end{tikzcd}
    	$$
    	induces an invertible natural tranformation $\pb{f}\pb{b}\simeq\pb{a}$. Since both $\pb{a}$ and $\pb{b}$ factor through formally constructible sheaves, we get $\pb{b}\simeq\eta\pb{a}$, where $\eta$ is any adjoint inverse of the restriction of $\pb{f}$ to $\fcSh{Y}{\C}$. Thus, by passing to right adjoints, we get the desired equivalence.
    \end{proof}
    We will need the following lemma.
    
    \begin{lemma}\label{basisconept}
    	Let $Z$ be a compact topological space. Let $\mathbb{R}_{\geq 0}\times Z\rightarrow C(Z)$ be the quotient map, and for each $\epsilon>0$ denote by $C_{\epsilon}(Z)$ the image of the open subset $[0,\epsilon)\times Z$. Then the family of open subsets
    	$$\{C_{\epsilon}(Z)\mid \epsilon\in\mathbb{R}_{>0}\}$$ forms a basis at the cone point.
    \end{lemma}

    \begin{proof}
    	We will prove that, for every open subset $W$ of $\mathbb{R}_{\geq 0}\times Z$ containing $\{0\}\times Z$, there exists some $\epsilon > 0$ such that $[0, \epsilon)\times Z\subseteq W$. Since $Z$ is compact, one can obtain a finite covering of $\{0\}\times Z$ with opens of the type $[0,\epsilon_i)\times V_i\subseteq W$, and thus by taking $\epsilon$ to be the minimum of the $\epsilon_i$ we get the claim.
    \end{proof}

    \begin{corollary}\label{stalksconstr}
    	Let $X$ be a $C^0$-stratified space and $F\in\fcSh{X}{\C}$. For any point $x\in X$ and any conical chart $\rnum^n\times C(Z)$ centered at $x$, the natural map
    	$$\Sec{\rnum^n\times C(Z)}{F}\rightarrow F_x$$
    	is an equivalence.
    \end{corollary}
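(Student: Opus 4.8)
The plan is to reduce to the standard chart and then exhibit a cofinal family of conical neighbourhoods of $x$ all of whose inclusions are stratified homotopy equivalences, so that \cref{htpyinvglobsec} applies termwise. First I would note that both $\Sec{\rnum^n\times C(Z)}{F}$ and the stalk $F_x$ depend only on the restriction of $F$ to the open subset $U\subseteq X$ underlying the given chart, and that this restriction is again formally constructible; hence we may assume outright that $X=\rnum^n\times C(Z)$ (this is the shape of conical charts by \cite[Lemma 2.2.2]{ayala2017local}, and in particular $Z$ is compact), that $x=(0,v)$ with $v$ the cone point, and that the map in the statement is the structure map $\Sec{X}{F}\to\colim_{x\in V}\Sec{V}{F}\simeq F_x$ attached to the largest neighbourhood $V=X$.

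Next I would fix the cofinal family. For $\delta\in(0,\infty]$ put $V_\delta=B_\delta(0)\times C_{<\delta}(Z)$, where $C_{<\delta}(Z)=\{[z,s]:s<\delta\}$, so that $V_\infty=X$. Since $Z$ is compact the subcones $C_{<\delta}(Z)$ form a neighbourhood basis of the cone point in $C(Z)$, hence $\{V_\delta\}_\delta$ is a cofinal (indeed linearly ordered, so filtered) family among the open neighbourhoods of $x$, and $F_x\simeq\colim_{\delta\to 0}\Sec{V_\delta}{F}$. The crucial claim is that for $\delta'\le\delta$ the inclusion $V_{\delta'}\hookrightarrow V_\delta$ is a stratified homotopy equivalence. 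To see this I would write down the radial shrinking $(w,[z,s])\mapsto(\tfrac{\delta'}{2}\tfrac{w}{1+|w|},[z,\tfrac{\delta'}{2}\tfrac{s}{1+s}])$, check that it is a stratified map $V_\delta\to V_{\delta'}$ (it carries the cone locus to itself, keeps every other point away from the cone locus, and preserves the $\rnum^n$-factor stratumwise), and check that the evident straight-line homotopies exhibit it as a two-sided stratified homotopy inverse of the inclusion (again, these homotopies preserve strata for the same reason). Granting the claim, \cref{htpyinvglobsec} makes every transition map $\Sec{V_\delta}{F}\to\Sec{V_{\delta'}}{F}$ an equivalence.

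Finally, a filtered colimit of a diagram all of whose morphisms are equivalences is equivalent to any of its objects, so $F_x\simeq\colim_{\delta\to 0}\Sec{V_\delta}{F}\simeq\Sec{V_\infty}{F}=\Sec{X}{F}$, and tracing through the identifications shows that this equivalence is inverse to the canonical map of the statement. I expect the only non-formal point — and hence the \emph{main obstacle}, though it is entirely routine — to be the explicit verification that the radial shrinkings and the connecting homotopies really are stratified; everything else is bookkeeping with a neighbourhood basis and an application of \cref{htpyinv}.
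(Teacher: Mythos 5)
Your proposal is correct and follows essentially the same route as the paper: both arguments produce a cofinal family of shrinking conical neighbourhoods of $x$ (using compactness of $Z$ to see that the subcones form a basis at the cone point) and then invoke \cref{htpyinvglobsec} to identify the terms of the resulting filtered colimit with $\Sec{\rnum^n\times C(Z)}{F}$. The only cosmetic difference is that you also shrink the Euclidean factor and fold the chart itself into the family as $V_\infty$, whereas the paper keeps $\rnum^n$ fixed and splits the identification into two separate equivalences.
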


    \begin{proof}    	
    	First of all, notice that there is a homeomorphism $\rnum^n\cong C(S^{n-1})$ sending $0$ to the cone point, and under which the subsets $C_{\epsilon}(S^{n-1})$ on the right-hand side are identified with open balls centered at zero and with radius $\epsilon$. Let us denote these subsets as $B_{\epsilon}(0)$. By \cref{basisconept}, the family of open subsets of the type $\{B_{\epsilon}(0)\times C_{\epsilon}(Z)\}_{\epsilon>0}$ is cofinal in the family of all open subsets of $\rnum^n\times C(Z)$ containing $(0, \text{cone point})$. Hence we see that
    	$$\varinjlim\limits_{\epsilon> 0}\Sec{B_{\epsilon}(0)\times C_{\epsilon}(Z)}{F}\simeq F_x.$$
    	By \cref{htpyinvglobsec} we have
    	\begin{equation*}
    		\Sec{\rnum^n\times C(Z)}{F}\simeq\varinjlim\limits_{\epsilon> 0}\Sec{\rnum^n\times C_{\epsilon}(Z)}{F}
    	\end{equation*}
        which concludes our proof.
    \end{proof}

    Now let $X\rightarrow P$ be a conically smooth stratified space, and let $\alpha\in P$. By \cref{tubneighstrata}, we get commutative triangle
    $$
    \begin{tikzcd}
    	& X_{\alpha} \arrow[ld, "i"', hook'] \arrow[rd, "{i_{\alpha}}", hook] & \\
    	C(\pi_{\alpha}) \arrow[rr, "j"', hook] & & X
    \end{tikzcd}
    $$
    where $\pi_{\alpha}$ is the fiber bundle $\link{X_{\alpha}}{X_{\geq\alpha}}\rightarrow X_{\alpha}$, $i$ is the cone-point section of the fiber bundle $p : C(\pi_{\alpha})\rightarrow X_{\alpha}$ and $j$ is a conically smooth open immersion. For any $F\in\Sh{X}{\C}$, the unit of the adjunction $\pb{i}\dashv\pf{i}$ gives a natural map
    \begin{equation}\label{mapreststrata}
    	\pf{p}\pb{j}F\rightarrow \pf{p}\pf{i}\pb{i}\pb{j}F\simeq\pb{i_{\alpha}}F.
    \end{equation}
    Furthermore the map (\ref{mapreststrata}) can be obtained by applying the sheafification functor to
    \begin{equation}\label{premapreststrata}
    	\pf{p}\pb{j}F\rightarrow \pf{p}\pf{i}(\pb{i})^{pre}\pb{j}F\simeq(\pb{i_{\alpha}})^{pre}F
    \end{equation}
    where $(\pb{i})^{pre}$ and $(\pb{i_{\alpha}})^{pre}$ denote the corresponding presheaf pullback functors.

    \begin{corollary}\label{restrata}
    	Let $X_{\alpha}\hookrightarrow X$ be the inclusion of a stratum in a conically smooth stratified space, and let $F$ be any formally constructible sheaf on $X$. Then the map (\ref{mapreststrata}) is an equivalence.
    \end{corollary}

    \begin{proof}
    	Since $F$ is a sheaf, it suffices to show that (\ref{premapreststrata}) is an equivalence. As usual, it suffices to prove that it is an equivalence after taking sections on any euclidean chart $U$ of $X_{\alpha}$. For any such $U$, by \cite[Corollary 7.1.4]{ayala2017local} we have $$\Sec{U}{\pf{p}\pb{j}F} = \Sec{U\times C(Z)}{F}$$ for some compact conically smooth stratified space $Z$. Thus we are left to show that the natural map
    	\begin{equation}\label{presh}
    		\Sec{U\times C(Z)}{F}\rightarrow\varinjlim\limits_{U\subseteq V}\Sec{V}{F}
    	\end{equation}
    	is an equivalence.
    	
    	By a cofinality argument, the map (\ref{presh}) factors through an equivalence
    	$$\varinjlim\limits_{\epsilon> 0}\Sec{U\times C_{\epsilon}(Z)}{F}\simeq\varinjlim\limits_{U\subseteq V}\Sec{V}{F},$$
    	and thus we are left to show that 
    	$$\Sec{U\times C(Z)}{F}\rightarrow\varinjlim\limits_{\epsilon> 0}\Sec{U\times C_{\epsilon}(Z)}{F}$$
    	is an equivalence. This last assertion then follows by \cref{htpyinvglobsec}.
        \end{proof}

    \subsection{Exodromy}
    
    This subsection is devoted to giving a proof of the exodromy equivalence on conically smooth stratified spaces for constructible sheaves valued in stable and bicomplete $\infty$-categories. To do this we use the model of the exit paths $\infty$-category of a conically smooth stratified space given in \cite[Definition 1.1.5]{ayala2017local}. As an intermediate step, we provide a useful characterization of the property of being formally constructible for a sheaf on a conically smooth stratified space. In short, this says that a sheaf is formally constructible if and only if it is homotopy invariant (see \cref{charconstr} for a precise statement). We also collect a couple of useful corollaries of this fact, that are used later in the following section to prove a crucial step of our main result.
    
    One has functors
    $$
    \begin{tikzcd}
    	\text{Bsc}_{/X}\arrow[r, "\gamma"] \arrow[d, "\text{im}"] & \cate{B}\text{sc}_{/X} \\
    	\cate{U}(X) &
    \end{tikzcd}
    $$
    where $\cate{U}(X)$ denotes the poset of open subsets of $X$, and $\text{im}$ sends an open immersion to its image in $X$. In \cite[Lemma 4.5.1]{ayala2017local}, the authors show that $\gamma$ is a localization at the class $\cate{W}$ of open immersions of basics $U\hookrightarrow V$ such that $U$ and $V$ are abstractly isomorphic in Strat. That is, precomposing with $\gamma$ gives an equivalence
    \begin{equation}\label{locbasics}
    	\pb{\gamma} : \Fun(\exit{}{X},\C)\rightarrow \Fun_{\cate{W}}(\text{Bsc}_{/X}\op, \C)
    \end{equation} 
    where the right-hand side denotes the full subcategory of $\Fun(\text{Bsc}_{/X}\op, \C)$ spanned by functors which send all morphisms in $\cate{W}$ to equivalences. In the next proposition we show that $\cate{W}$ coincides with the class of open immersions which are stratified homotopy equivalences, and then characterize the property of being formally constructible through these maps.

    \begin{proposition}\label{charconstr}
    	Let $X\rightarrow P$ be a conically smooth stratified space, and let $F\in\Sh{X}{\C}$. Then the following assertions are equivalent: 
    	\begin{enumerate}[(i)]
    		\item $F$ is formally constructible;
    		\item for any inclusion $V\hookrightarrow U$ of basic open subsets of $X$ which is a stratified homotopy equivalence, then 
    		$$\Sec{U}{F}\rightarrow\Sec{V}{F}$$
    		is an equivalence;
    		\item for any inclusion $V\hookrightarrow U$ of basic open subsets of $X$ which are abstractly isomorphic, then 
    		$$\Sec{U}{F}\rightarrow\Sec{V}{F}$$
    		is an equivalence.
    	\end{enumerate}
    \end{proposition}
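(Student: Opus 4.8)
\section*{Proof proposal}

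The plan is to establish the cyclic chain $(i)\Rightarrow(ii)\Rightarrow(iii)\Rightarrow(i)$, the bulk of the work being the last implication. For $(i)\Rightarrow(ii)$: if $F$ is formally constructible then so is its restriction $F|_U$ to any open $U\subseteq X$, since the strata of $U$ are open in the strata of $X$; hence if $V\hookrightarrow U$ is an open immersion of basic opens which is a stratified homotopy equivalence, \cref{htpyinvglobsec} applied to $F|_U\in\fcSh{U}{\C}$ yields that $\Sec{U}{F}\to\Sec{V}{F}$ is an equivalence.

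For $(ii)\Rightarrow(iii)$ I would first check that $W$ is contained in the class of stratified homotopy equivalences among open immersions of basic opens. Given $V\hookrightarrow U$ in $W$ and an abstract isomorphism $\phi\colon U\xrightarrow{\sim}V$ in $\mathrm{Strat}$, the composite $U\xrightarrow{\phi}V\hookrightarrow U$ is an open self-embedding of the basic $U$, which by the isotopy lemmas for basics in \cite{ayala2017local} is ambient isotopic, hence stratified homotopic, to $\mathrm{id}_U$; since $\phi$ is an isomorphism, $V\hookrightarrow U$ is therefore a stratified homotopy equivalence. (Incidentally the reverse containment also holds: a stratified homotopy equivalence of basic opens induces an equivalence of exit path $\infty$-categories which must preserve initial objects, and in the model $\exit{}{-}=(\cate{B}\text{sc}_{/-})^{\opposite}$ the initial object of $\exit{}{U}$ is $U$ itself, so $V\cong U$.) Granting this containment, $(ii)$ immediately implies $(iii)$.

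The main step is $(iii)\Rightarrow(i)$. Assume $F$ inverts every morphism in $W$. I would first observe that under this hypothesis the conclusion of \cref{stalksconstr} still holds for $F$: the only appeals to formal constructibility in the proof of that corollary are the equivalences $\Sec{B_\delta^n\times C_\epsilon(Z)}{F}\simeq\Sec{\rnum^n\times C_\epsilon(Z)}{F}$ and $\Sec{\rnum^n\times C(Z)}{F}\simeq\varinjlim_\epsilon\Sec{\rnum^n\times C_\epsilon(Z)}{F}$, both of which are induced by open immersions of abstractly isomorphic basics and hence are equivalences by $(iii)$; so for every conical chart $\rnum^n\times C(Z)$ centered at a point $z$ one gets $\Sec{\rnum^n\times C(Z)}{F}\xrightarrow{\sim}F_z$. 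Now, to prove $F$ formally constructible it suffices, by locality and the fact that every point $x$ of a stratum $X_\alpha$ lies in the bottom stratum of a conical chart $N=\rnum^n\times C(Z)$ around it (with $N\cap X_\alpha=\rnum^n\times\{c\}$), to show that $(\pb{i_\alpha}F)|_{N\cap X_\alpha}$ is locally constant. Writing $q\colon N\to\rnum^n$ for the projection and $\iota\colon\rnum^n\hookrightarrow N$ for the closed bottom-stratum inclusion, restriction to opens commutes with pullback, so this restriction is identified with $\pb{\iota}(F|_N)$; I would compare it with $\pf{q}(F|_N)$ through the natural map $\theta\colon\pf{q}(F|_N)\to\pf{q}\pf{\iota}\pb{\iota}(F|_N)\simeq\pb{\iota}(F|_N)$ coming from the unit of $\pb{\iota}\dashv\pf{\iota}$.

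Two points remain. First, $\pf{q}(F|_N)$ is locally constant: indeed $\Sec{B}{\pf{q}(F|_N)}=\Sec{B\times C(Z)}{F}$ for every open ball $B\subseteq\rnum^n$, and each inclusion $B'\times C(Z)\hookrightarrow B\times C(Z)$ (in particular $B\times C(Z)\hookrightarrow\rnum^n\times C(Z)$) lies in $W$, so $\pf{q}(F|_N)$ inverts every inclusion of open balls and is therefore the constant sheaf $\underline{\Sec{N}{F}}$ on $\rnum^n$. Second, $\theta$ is an equivalence, which I would check on stalks: for $y\in\rnum^n$ the target has stalk $(F|_N)_{(y,c)}=F_{(y,c)}$ since $\iota$ is a closed immersion, the source has stalk $\varinjlim_{B\ni y}\Sec{B\times C(Z)}{F}$, and restricting $B$ to the cofinal system of small balls $B_\delta(y)$ — for which $B_\delta(y)\times C(Z)$ is a conical chart at $(y,c)$ — the transition maps become equivalences by $(iii)$ and each term becomes $F_{(y,c)}$ by the extension of \cref{stalksconstr} above, so $\theta_y$ is the (invertible) restriction-to-stalk map. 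Hence $(\pb{i_\alpha}F)|_{N\cap X_\alpha}\simeq\pf{q}(F|_N)$ is locally constant, and as $\alpha$ and the chart were arbitrary, $F$ is formally constructible. The step I expect to be the main obstacle is exactly this last one: the hypotheses $(ii)$ and $(iii)$ only control sections over \emph{basic} open subsets, whereas the restriction of $F$ along a stratum involves arbitrary opens and sheafification; the device that bridges this gap is to replace the presheaf-pullback computation of $\pb{i_\alpha}F$ by the well-behaved pushforward $\pf{q}$ along a local conical chart and to verify the comparison map on stalks.
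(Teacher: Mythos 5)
Your proposal is correct and follows essentially the same route as the paper: $(i)\Rightarrow(ii)$ via homotopy invariance, $(ii)\Leftrightarrow(iii)$ via the classification of open embeddings of basics (the paper quotes \cite[Lemmas 4.3.6 and 4.3.7]{ayala2017local} exactly as you do), and $(iii)\Rightarrow(i)$ by identifying the restriction to a stratum with a pushforward along the conical projection and reducing to the Euclidean case --- your local map $\theta\colon\pf{q}(F|_N)\to\pb{\iota}(F|_N)$ is just the chart-level version of the map in \cref{restrata}, which the paper applies globally (and, like you, implicitly observes that its proof only needs hypothesis $(iii)$). The one step you compress too much is the assertion that a sheaf on $\rnum^n$ inverting all inclusions of open balls \emph{is therefore} the constant sheaf: this is precisely the nontrivial Euclidean case, which the paper proves by producing the counit $\pb{a}\pf{a}F\to F$, checking it on the basis of opens diffeomorphic to $\rnum^n$ via the triangle identity, and invoking fully faithfulness of $\pb{a}$ (homotopy invariance of the shape, \cite[Corollary 3.1]{volpe2021six}); you should cite or reproduce that argument rather than state the conclusion.
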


    \begin{proof}
    	We first prove that (ii) is equivalent to (iii) by showing that an open immersion $j : V\hookrightarrow U$ of basic open subsets of $X$ is a stratified homotopy equivalence if and only if $U$ and $V$ are abstractly isomorphic. 
    	
    	First of all, observe that $j$ is conically smooth, because the conically smooth structures of $U$ and $V$ are restricted from the one of $X$. If $j$ is a stratified homotopy equivalence, it follows that $U$ and $V$ are stratified over the same subposet of $P$, and so in particular $V$ intersects the stratum of maximal depth of $U$. Therefore, by the equivalence of the conditions (2) and (4) in \cite[Lemma 4.3.7]{ayala2017local}, $U$ and $V$ are isomorphic. 
    	
    	Viceversa, assume that $U$ is abstractly isomorphic to $V$. Therefore, up to composing with isomorphisms on both sides, we may assume $j$ is of the form $j : \rnum^n\times C(Z)\hookrightarrow\rnum^n\times C(Z)$. Then by \cite[Lemma 4.3.6]{ayala2017local} $j$ is homotopy equivalent to $D_0j$, where $D_0j$ denotes the differential of $j$ at the point $(0, \text{cone pt})$ (see \cite[Definition 3.1.4]{ayala2017local}). Since $D_0j$ is a stratified homotopy equivalence, then the same is true for $j$.
    	
    	By \cref{htpyinv} we have that (i) implies (ii), so we are left to show that (iii) implies (i). Let $i : Y\rightarrow X$ be the inclusion of a stratum, and let $V\hookrightarrow U$ be an inclusion of euclidean charts of $Y$. By \cref{restrata}, the horizontal arrows in the commutative square
    	$$
    	\begin{tikzcd}
    		\Sec{U\times C(Z)}{F} \ar[r] \ar[d] & \Sec{U}{\pb{i}F} \ar[d] \\
    		\Sec{V\times C(Z)}{F} \ar[r] & \Sec{V}{\pb{i}F}
    	\end{tikzcd}
        $$
        are invertible, and thus $\Sec{U}{\pb{i}F}\rightarrow\Sec{V}{\pb{i}F}$ is invertible too. Therefore, to deduce that $\pb{i}F$ is locally constant, we just need to show that (iii) implies (i) in the special case when $X$ has a trivial stratification, i.e. when $X$ is a smooth manifold. The result is now a very special case of \cite[Proposition 3.1]{Haine2020TheHO}. For the reader's convenience, we review and adapt the proof of \cite[Proposition 3.1]{Haine2020TheHO} to our setting in the following proposition.
    \end{proof}

    \begin{proposition}
    	Let $X$ be a smooth manifold, and let $F\in\Sh{X}{\C}$. Then the following assertions are equivalent: 
    	\begin{enumerate}[(i)]
    		\item $F$ is locally constant;
    		\item for any inclusion $V\hookrightarrow U$ of euclidean charts of $X$, the restriction 
    		$$\Sec{U}{F}\rightarrow\Sec{V}{F}$$
    		is an equivalence.
    	\end{enumerate}
    \end{proposition}

    \begin{proof}
    	Since the question is local, we may assume that $X = \rnum^n$, in which case we will prove that condition (ii) implies that $F$ is constant. More precisely, we will show that, if $a : \rnum^n\rightarrow \ast$ is the unique map, the counit morphism
    	\begin{equation}\label{consteuc}
    		\pb{a}\pf{a}F\rightarrow F
    	\end{equation}
    	is an equivalence. Since $\rnum^n$ is hypercomplete and admits a basis given by those open subsets diffeomorphic to itself, it then suffices to check that for any such open $j :U\hookrightarrow \rnum^n$, the map $\pf{a}\pf{j}\pb{j}\pb{a}\pf{a}F\rightarrow\pf{a}\pf{j}\pb{j}F$ obtained by applying to (\ref{consteuc}) the functor of sections at $U$ is invertible. Notice that we have a commutative triangle
    	$$
    	\begin{tikzcd}[column sep= huge]
    		\pf{a}\pf{j}\pb{j}\pb{a}\pf{a}F \arrow[r, "{\pf{a}\pf{j}\pb{j}\text{counit}_F}"]  \arrow[d, "{\text{counit}_{\pf{a}F}}"']& \pf{a}\pf{j}\pb{j}F \\
    		\pf{a}F \arrow[ur, "{\pf{a}(\text{unit}_F)}"', "\simeq"]
    	\end{tikzcd}
    	$$
    	where the diagonal map is invertible by the assumption in (iii). Thus to conclude the proof it suffices to show that $\pb{a}$ is fully faithful, which follows by the homotopy invariance of the shape (see \cite[Corollary 3.1]{volpe2021six}).
    \end{proof}
    
    \begin{corollary}\label{fiberbundlespreserveconstructibility}
    	Let $f:L\rightarrow X$ be a conically smooth fiber bundle. Then the pushforward $\pf{f}^{\C}:\Sh{L}{\C}\rightarrow\Sh{X}{\C}$ preserves formally constructible sheaves.
    \end{corollary}

    \begin{proof}
    	By definition of a conically smooth fiber bundle, for any point $x\in X$, there exist an open neighbourhood $U$ of $x$, a conically smooth stratified space $Y$ and a pullback square
    	$$
    	\begin{tikzcd}
    		U\times Y \arrow[r, hook]\arrow[d, "p"]\arrow[dr, phantom, "\usebox\pullback" , very near start, color=black] & L\arrow[d, "f"] \\
    		U\arrow[r, hook] & X,
    	\end{tikzcd}
    	$$
    	where $p$ is the canonical projection.
    	Let $F$ be a formally constructible sheaf on $L$. To prove that $\pf{f}^{\C}F$ is formally constructible, it suffices to show that its restriction to any $U$ as above is formally constructible. Therefore, since any open immersion gives a locally contractible geometric morphism, by smooth base-change (see \cite[Lemma 3.25]{volpe2021six}) it suffices to show that $\pf{p}^{\C}$ preserves formally constructible sheaves.
    	
    	Let $G$ be any formally constructible sheaf on $U\times Y$, and let $j:V\hookrightarrow W$ be any open immersion of basics in $U$ which is a stratified homotopy equivalence. By \cref{charconstr}, we need to show that the restriction of $\pf{p}^{\C}G$ corresponding to $j$ is an equivalence. We have a commutative square
    	$$
    	\begin{tikzcd}
    		\Sec{W}{\pf{p}^{\C}G}\arrow[r]\arrow[d, "\simeq"] & \Sec{V}{\pf{p}^{\C}G} \arrow[d, "\simeq"] \\
    		\Sec{W\times Y}{G}\arrow[r] & \Sec{V\times Y}{G}.
    	\end{tikzcd}
    	$$
    	Since $G$ is constructible and $j\times\text{id}_Y:V\times Y\hookrightarrow W\times Y$ is again a stratified homotopy equivalence, by \cref{htpyinvglobsec} we see that the lower horizontal arrow is an equivalence. The proof is then concluded by observing that both vertical arrows are equivalences.
    \end{proof}

    \begin{corollary}\label{shriekrestrofformconstrislocconst}
    	Let $X$ be a conically smooth stratified space, and let $i:X_{\alpha}\hookrightarrow X$ be the inclusion of a stratum. Let $F\in\Sh{X}{\C}$ be a formally constructible sheaf. Then $\pbp{i}_{\C}F$ is locally constant. 
    \end{corollary}

    \begin{proof}
    	We have a fiber sequence
    	\begin{equation}\label{fibclosedstratum}
    		\pbp{i}_{\C}F\rightarrow\pb{i}_{\C}F\rightarrow\pb{i}_{\C}\pf{j}^{\C}\pb{j}_{\C}F
    	\end{equation}
    	where $j$ is the open immersion $X_{>\alpha}\hookrightarrow X$.
    	Hence to conclude it suffices to show that $\pb{i}_{\C}\pf{j}^{\C}\pb{j}_{\C}F$ is locally constant.
    	
    	Let $\pi_{\alpha}:\link{X_{\alpha}}{X_{\geq\alpha}}\rightarrow X_{\alpha}$ be the projection from the link of $X_{\alpha}$ in $X_{\geq\alpha}$. Denote by $k$ the open immersion 
    	$$k:\link{X_{\alpha}}{X_{\geq\alpha}}\times\rnum_{>0}\hookrightarrow C(\pi_{\alpha})\hookrightarrow X,$$
    	and by $p$ the conically smooth fiber bundle
    	$$p:\link{X_{\alpha}}{X_{\geq\alpha}}\times\rnum_{>0}\rightarrow\link{X_{\alpha}}{X_{\geq\alpha}}\xrightarrow{\pi_{\alpha}}X_{\alpha},$$
    	where the first arrow is the canonical projection. Then, by \cref{restrata}, we have an equivalence $\pb{i}_{\C}\pf{j}^{\C}\pb{j}_{\C}F\simeq\pf{p}^{\C}\pb{k}_{\C}F.$ But since $\pb{k}_{\C}F$ is formally constructible, and $p$ is a conically smooth fiber bundle, we can apply \cref{fiberbundlespreserveconstructibility} to deduce that $\pb{i}_{\C}\pf{j}^{\C}\pb{j}_{\C}F$ is locally constant, and therefore the proof is concluded.	
    \end{proof}

    \begin{theorem}[Exodromy]\label{exodromy}
    	The composition 
    	$$
    	\begin{tikzcd}
    		\Fun(\exit{}{X}, \C) \arrow[r, "\pb{\gamma}"] & \Fun_{\cate{W}}((\text{Bsc}_{/X})\op, \C) \arrow[r, "\pf{\text{im}}"] & \Fun(\cate{U}(X)\op, \C)
    	\end{tikzcd}
    	$$
    	is fully faithful with essential image $\fcSh{X}{\C}$. Moreover, if we assume that $\C$ has a closed symmetric monoidal structure, the statement remains true if we replace $\Fun(\exit{}{X}, \C)$ by $\Fun(\exit{}{X}, \C^{\text{dual}})$ and $\fcSh{X}{\C}$ by $\cSh{X}{\C}$.
    \end{theorem}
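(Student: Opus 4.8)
The plan is to run the argument of \cite[Section 4.5]{ayala2017local} (see also \cite{lurie2017higher}, \cite{Haine2020TheHO}), replacing every appeal to presentability of the coefficients with the corresponding structural statement about $\C$-valued sheaves from \cite{volpe2021six}, and feeding in \cref{charconstr}. The first step is that $\pb{\gamma}$ is an equivalence onto $\Fun_W(\text{Bsc}_{/X}, \C)$: this is exactly (\ref{locbasics}), and since it only concerns the indexing category $\text{Bsc}_{/X}$ it requires nothing about $\C$ beyond the existence of the relevant colimits. Bicompleteness of $\C$ also guarantees that the right Kan extension $\pf{\iota}$ exists. So everything reduces to showing that the restriction of $\pf{\iota}$ to $\Fun_W(\text{Bsc}_{/X}, \C)$ is fully faithful with essential image $\fcSh{X}{\C}$.

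For full faithfulness I would observe that $\pf{\iota}$ is already fully faithful on all of $\Fun(\text{Bsc}_{/X}, \C)$, because $\iota$ is fully faithful: a morphism in $\text{Bsc}_{/X}$ is an open immersion of basics lying over $X$, and such a map is forced to be the inclusion of opens, so the comma $\infty$-category computing the counit $\pb{\iota}\pf{\iota}\to\mathrm{id}$ at a basic open $U$ has $U$ as a terminal object. A fortiori the restriction to $\Fun_W$ is fully faithful.

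Next I would identify the essential image. Given $G\in\Fun_W(\text{Bsc}_{/X},\C)$, one first checks that $\pf{\iota}G$ is a sheaf; this is the one genuinely nonformal point, and I would deduce it exactly as in \cite{ayala2017local}, resp.\ \cite{Haine2020TheHO}, from the descent and gluing properties of $\C$-valued sheaves established in \cite[Corollary 4.2]{volpe2021six}, which are available for any stable bicomplete $\C$. Granting this, $\pb{\iota}\pf{\iota}G\simeq G$ inverts the class $W$, so \cref{charconstr} shows that the sheaf $\pf{\iota}G$ is formally constructible. Conversely, if $F\in\fcSh{X}{\C}$, then $\pb{\iota}F$ lies in $\Fun_W$ by \cref{charconstr}, and the unit $F\to\pf{\iota}\pb{\iota}F$ is an equivalence because $F$, being a sheaf, satisfies descent along covers of opens by basic opens (again \cite[Corollary 4.2]{volpe2021six}), so that $F(O)$ is recovered as the limit of the $F(V)$ over the basic opens $V\subseteq O$. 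Hence $F\simeq\pf{\iota}\pb{\iota}F$ lies in the essential image, and the first assertion follows.

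Finally, for the statement about $\C^c$: the functor $\exit{}{X}\to\C$ attached to a formally constructible $F$ sends a basic open immersion $U=\rnum^n\times C(Z)\hookrightarrow X$ to $\Sec{U}{F}$, and by \cref{stalksconstr} the latter is the stalk $F_x$ at the cone point $x$ of $U$; conversely, every stalk of $F$ arises this way from a conical chart. Therefore this functor factors through $\C^c$ precisely when all stalks of $F$ are dualizable, i.e.\ when $F\in\cSh{X}{\C}$, and the full subcategory $\Fun(\exit{}{X},\C^c)\subseteq\Fun(\exit{}{X},\C)$ is carried by the fully faithful functor of the first part onto $\cSh{X}{\C}$, full faithfulness being inherited. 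I expect the main obstacle to be the descent bookkeeping in the third paragraph --- reconstructing an arbitrary sheaf as the right Kan extension of its restriction to basics --- which, although formal once the gluing results of \cite{volpe2021six} are granted, is the only place where the proof is not a bare manipulation of adjunctions.
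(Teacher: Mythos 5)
Your overall architecture matches the paper's: reduce everything to showing that $\pf{\iota}$ carries $W$-inverting functors to sheaves, use \cref{charconstr} to identify the $W$-inverting condition with formal constructibility, and dispatch the $\C^c$ statement via \cref{stalksconstr} (every basic computes a stalk and every stalk is computed by a conical chart). The full faithfulness of $\pf{\iota}$ via full faithfulness of $\iota$, and the recovery of a sheaf as the right Kan extension of its restriction to basics, are also as in the paper; note however that for non-presentable $\C$ the reduction of the sheaf condition to covering sieves consisting of basics is supplied by \cite[Proposition 3.2.23]{ayala2017local} together with \cite[Theorem A.6]{aoki2020tensor}, not by \cite[Corollary 4.2]{volpe2021six}: the latter is the recollement-along-strata statement used in the proof of homotopy invariance and does no work here.

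The genuine gap is in your third paragraph. You correctly flag the verification that $\pf{\iota}G$ is a sheaf as the one nonformal point, but then defer it entirely to the references while citing a result that does not perform this task. This step is precisely where the paper's proof uses geometry: given a covering sieve $T$ of a basic $U$, some $V\in T$ must meet the locus of maximal depth of $U$, hence by \cite[Lemma 4.3.7]{ayala2017local} $V$ is abstractly isomorphic to $U$, so the inclusion $V\hookrightarrow U$ lies in $W$; one then observes that $V$ becomes a terminal object in the localization of $T$ at $W$, so that for a $W$-inverting $G$ the limit over $T$ collapses to $\Sec{V}{G}\simeq\Sec{U}{G}$, which is exactly the descent condition on the basis. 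Without this argument (or an equivalent one) the essential-image identification does not go through: a $W$-inverting presheaf on basics has no reason a priori to Kan-extend to a sheaf. You should either reproduce this argument or locate it precisely, rather than attributing the needed input to the gluing results of \cite{volpe2021six}.
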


    \begin{proof}
    	By \cref{charconstr}, it suffices to show that the restriction of $\pf{\text{im}}$ to $\Fun_{\cate{W}}(\text{Bsc}_{/X}, \C)$ factors through $\fcSh{X}{\C}$. 
    	
    	Let $U$ be a basic open subset of $X$ and let $\kappa : T\hookrightarrow\text{Bsc}_{/U}$ be a covering sieve. There is at least one $V\in T$ whose image in $U$ intersects the deepest stratum. By the equivalence of conditions (2) and (4) in \cite[Lemma 4.3.7]{ayala2017local}, we see that $U$ and $V$ are abstractly isomorphic. Then for any $F\in\Fun_{\cate{W}}(\text{Bsc}_{/X},  \C)$ we have a commutative triangle
    	$$
    	\begin{tikzcd}
    		\Sec{U}{F} \arrow[r] \arrow[dr, "\simeq"'] & \varprojlim\limits_{O\in T}\Sec{O}{F} \arrow[d] \\
    		& \Sec{V}{F}
    	\end{tikzcd}
    	$$
    	where the diagonal map is invertible by assumption. By \cite[Proposition 3.2.23]{ayala2017local} open subsets isomorphic to basics form a basis for the topology of $X$. Since $X$ is hypercomplete, by \cite[Theorem A.6]{aoki2020tensor} to show that $\pf{\text{im}}F$ is a sheaf it suffices to check the sheaf condition on open covers formed by basics of basic open subsets. More succinctly, we have to show that the horizontal map in the triangle above is invertible. Moreover, by the 2-out-of-3 property, it suffices to show that the vertical map is invertible. 
    	
    	Let $\delta: T\rightarrow \cate{T}$ be the localization of $T$ at $\cate{W}$. Since $\delta$ is final and $\pb{\kappa}$ sends maps in $X$ to equivalences, the result then follows by observing that $V$ is a terminal object in $\cate{T}$.
    	
    	For the second part of the statement, we just need to show that for any functor $F:\exit{}{X}\rightarrow\C^{\text{dual}}$, the stalks of $\pb{\gamma}\pb{\text{im}}F$ are dualizable. By \cref{stalksconstr}, it suffices to prove that for any open $U$ which is isomorphic to a basic, $\Sec{U}{\pf{\text{im}}\pb{\gamma}F}$ is dualizable. But we see that $\Sec{U}{\pf{\text{im}}\pb{\gamma}F}$ is equivalent to the value of $F$ at the object $$(U\subseteq X)\in\exit{}{X}=(\cate{B}\text{sc}_{/X})\op,$$ and therefore is dualizable by assumption.
    \end{proof}

    \begin{example}\label{exampleconsafterexit}
    	Consider again the stratified space $(X\rightarrow P)\coloneqq(\rnum_{>0}\rightarrow\{0<1\})$ as in \cref{exampleexit}, and let $\C$ be a stable and bicomplete $\infty$-category. It follows from \cref{exodromy} that giving a formally constructible sheaf on $X$ is essentially the same as providing two objects $M$ and $N$ in $\C$, a $\mathbb{Z}$-action on $N$, and a $\mathbb{Z}$-equivariant map $\alpha:M\rightarrow N$, where $M$ is equipped with the trivial action. One may equivalently supply a $\mathbb{Z}$-equivariant object $N$ and a map $\tilde{\alpha}: M\rightarrow N^{h\mathbb{Z}}$, where the target of $\tilde{\alpha}$ denotes the homotopy fixed points.
    \end{example}

    \begin{remark}
        Notice that even though we assumed from the beginning that the coefficients are stable and bicomplete, all the arguments we have discussed work whenever $\Sh{X}{\C}\hookrightarrow\Fun(\cate{U}(X)\op, \C)$ admits a left adjoint and $\C$ respects glueings in the sense of \cite[Definition 5.17]{Haine2020TheHO}. In particular, our proof also recovers the case $\C = \spaces$. A proof of the exodromy equivalence with presentable coefficients but on a much bigger class of stratified spaces can be found in \cite{porta2022topological}. 
    \end{remark}

    \begin{corollary}\label{globsecconstrshcpt}
        Let $Z$ be any compact conically smooth stratified space, and let $F\in\cSh{Z}{\C}$. Then $\Sec{Z}{F}$ is dualizable.
    \end{corollary}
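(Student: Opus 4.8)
The plan is to combine the exodromy equivalence with the finiteness of the exit path $\infty$-category. By \cref{exodromy}, restriction along $\iota$ and $\gamma$ carries $F\in\cSh{Z}{\C}$ to a functor $\mathcal{F}\colon\exit{}{Z}\to\C^c$ (it lands in dualizable objects precisely because $F$ is constructible, this being the content of the second half of \cref{exodromy}). Unwinding the description of $\pf{\iota}$ as a right Kan extension, the sections of $F$ over the terminal open subset $Z$ are computed by a limit over the comma category of basic open immersions into $Z$ — which is all of $\text{Bsc}_{/Z}^{op}$, since every such immersion has image contained in $Z$ — and, using that $\gamma$ is a localization at $W$, this limit is identified with
$$\Sec{Z}{F}\simeq\varprojlim_{\exit{}{Z}}\mathcal{F}.$$
(When $Z$ is itself a basic, $\exit{}{Z}$ has an initial object and this recovers \cref{stalksconstr}.) By \cref{finexit} the $\infty$-category $\exit{}{Z}$ is finite, so it suffices to prove the general claim: for every finite $\infty$-category $J$ and every functor $\mathcal{G}\colon J\to\C^c$, the limit $\varprojlim_J\mathcal{G}$ lies in $\C^c$.

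To prove this claim I would induct along the description of $\infcat^f$ in \cref{fincat}. Let $\mathcal{S}$ be the class of $\infty$-categories $J$ such that $\varprojlim_J\mathcal{G}\in\C^c$ for every $\mathcal{G}\colon J\to\C^c$; it is closed under equivalences, so it is enough to check that $\mathcal{S}$ contains $\emptyset$, $\Delta^0$, $\Delta^1$ and is closed under pushouts in $\infcat$. The generating cases are immediate: the limit over $\emptyset$ is the zero object of $\C$, the limit over $\Delta^0$ is the unique value, and the limit over $\Delta^1$ is the value at its initial object $0$. For a pushout $J=A\sqcup_C B$ in $\infcat$, passing to right adjoints of the constant-diagram functors through $\Fun(J,\C)\simeq\Fun(A,\C)\times_{\Fun(C,\C)}\Fun(B,\C)$ yields a natural equivalence
$$\varprojlim_{J}\mathcal{G}\;\simeq\;\left(\varprojlim_{A}\mathcal{G}|_A\right)\times_{\varprojlim_{C}\mathcal{G}|_C}\left(\varprojlim_{B}\mathcal{G}|_B\right),$$
so if $A,B,C\in\mathcal{S}$ the three corners lie in $\C^c$, and hence so does the pullback, because $\C^c$ is closed under finite limits inside the stable $\infty$-category $\C$. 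This last closure is standard: $\C^c$ is visibly closed under retracts, shifts and finite coproducts, and it is closed under cofibers since applying the internal hom into an arbitrary object turns a cofiber sequence into a fiber sequence which, by exactness of the tensor product in each variable, is identified with $(-)^\vee$-tensoring of the dual cofiber sequence; being stable, $\C$ then has $\C^c$ closed under all finite limits.

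The genuinely non-formal inputs are the two results already in hand — the identification $\Sec{Z}{F}\simeq\varprojlim_{\exit{}{Z}}\mathcal{F}$, which is essentially exodromy (\cref{exodromy}) evaluated at the terminal open, and the finiteness of $\exit{}{Z}$ (\cref{finexit}). The step I expect to need the most care is precisely the passage from ``$\exit{}{Z}$ is finite'' to ``$\varprojlim_{\exit{}{Z}}\mathcal{F}\in\C^c$'': one must make sure the inductive class $\mathcal{S}$ is legitimately closed under the pushouts taken in $\infcat$ and that the limit-over-a-pushout formula above is the correct one. Everything else is bookkeeping with the exodromy equivalence and elementary facts about dualizable objects in a stable closed symmetric monoidal $\infty$-category.
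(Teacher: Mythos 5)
Your proof is correct and follows essentially the same route as the paper: identify $\Sec{Z}{F}$ with the limit over $\exit{}{Z}$ of a $\C^c$-valued functor via \cref{exodromy} and the finality of $\gamma$, invoke \cref{finexit}, and conclude because $\C^c$ is a stable subcategory of $\C$, hence closed under limits indexed by finite $\infty$-categories. The paper compresses this last step into the single observation that $\C^c$ is stable, whereas you spell out the induction over the generators and pushouts of \cref{fincat}; that is added detail rather than a different argument.
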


    \begin{proof}
    	By \cref{exodromy}, we know that there exists an essentially unique functor $G : \exit{}{Z}\rightarrow \C^{\text{dual}}$ such that $\pf{\text{im}}\pb{\gamma}G\simeq F$. Therefore, since $\gamma$ is final, global sections of $F$ are equivalent to the limit of $G$. The proof is then concluded by applying \cref{finexit} and observing that, since $\C$ is stable and its monoidal structure is closed, $\C^{\text{dual}}$ is itself stable.
    \end{proof}

    \begin{remark}
    	Let $Z$ be a stratified space such that $\exit{}{Z}$ is a retract in $\infcat$ of a finite $\infty$-category, and assume that the exodromy equivalence holds for constructible sheaves on $Z$. Since $\C^{\text{dual}}$ is idempotent complete, the same argument of \cref{globsecconstrshcpt} shows that for any $F\in\cSh{X}{\C}$, the object $\Sec{Z}{F}$ is dualizable.
    \end{remark}
	
    \section{Verdier duality}
	    This final section is devoted to proving Verdier duality for conically smooth spaces (\cref{verdduality}). For this reason, from now on our $\infty$-categories of coefficients are assumed to be equipped with a closed symmetric monoidal structure. We first introduce the Verdier duality functor, and then recall the definition of Lurie's \textit{covariant Verdier duality}. A crucial observation for the proof stategy that we adopt is that these two functors are closely related. 
	    
	     For any locally compact Hausdorff topological space $X$, we will denote by $\omega_X^{\C}$ the sheaf $\pbp{a}(\mathds{1}_{\C})$, where $a: X\rightarrow\ast$ is the unique map and $\mathds{1}_{\C}$ is the monoidal unit in $\C$. The sheaf $\omega^{\C}_X$ will be called the $\C$-valued \textit{dualizing sheaf} of $X$. We denote the functor
	    	$$\sHom{X}{-}{\omega_X^{\C}} : \Sh{X}{\C}\op\rightarrow\Sh{X}{\C}$$
	    simply by $D_X^{\C}$ and, when $X = \ast$, we will only write $D^{\C}: \C\op\rightarrow\C$. In this case, $D^{\C}$ sends an object $M\in\C$ to its dual $\sHom{\C}{M}{\mathds{1}_{\C}}$. Therefore, $D^{\C}$ gives an equivalence between $\C^{\text{dual}}$ and its opposite.
	    
		Recall that, for any $F\in\Sh{X}{\C}$ and $V\in\cate{U}(X)$, one defines the \textit{compactly supported sections} of $F$ at $V$ by
		\begin{equation*}
			\cSec{V}{F}\coloneqq\varinjlim\limits_{K\subseteq V}\KSec{V}{F}{K}.
		\end{equation*}
		In the colimit above, $K$ ranges through the compact subsets of $V$, and $\KSec{V}{F}{K}$ denotes the fiber of the restriction $\Sec{V}{F}\rightarrow\Sec{V\setminus K}{F}$ (see for example \cite[Definition 5.6]{volpe2021six} and the whole section for a more detailed discussion). The association $F\mapsto\cSec{X}{F}$ gives a left adjoint to the functor $\pbp{a}$. 
		The above construction can be upgraded to a functor
		$$
		\begin{tikzcd}[row sep=tiny]
			\Sh{X}{\C}\arrow[r, "{\mathbb{D}_X^{\C}}"] & \coSh{X}{\C} \\
			F\arrow[r, maps to] & (U\mapsto\cSec{U}{F}).
		\end{tikzcd}
		$$
		In \cite[Theorem 5.5.5.1]{lurie2017higher}, Lurie shows that $\mathbb{D}$ is an equivalence of $\infty$-categories. This equivalence is referred to as \textit{covariant Verdier duality}. Our next lemma explains the relation between covariant Verdier duality and the contravariant functor $D_X^{\C}$.
		
	\begin{lemma}\label{factorizationcovverdcontrverd}
		Let $X$ be any locally compact Hausdorff topological space, and let $\C$ be any stable bicomplete $\infty$-category equipped with a closed symmetic monoidal structure. Then there is a factorization 
		$$
		\begin{tikzcd}
			\Sh{X}{\C}\op\arrow[rr, "{\sHom{X}{-}{\omega_X}}"]\arrow[dr, "\mathbb{D}"', "\simeq"] & & \Sh{X}{\C}. \\
			& \coSh{X}{\C}\op \arrow[ur, "{D^{\C}_{\bullet}}"']
		\end{tikzcd}
		$$
		where $D^{\C}_{\bullet}$ denotes the functor obtained by postcomposing with $D^{\C} : \C\op\rightarrow\C$.
	\end{lemma}

    \begin{proof}
    		Let $j:U\hookrightarrow X$ be any open subset of $X$. Then, for any $F\in\Sh{X}{\C}$, by applying \cite[Corollary 3.26]{volpe2021six}, \cite[Lemma 6.5]{volpe2021six} and \cite[Proposition 6.12]{volpe2021six} we get functorial equivalences
    	\begin{align*}
    		\Sec{U}{\sHom{X}{F}{\omega_X}}&\simeq\Sec{U}{\sHom{U}{\pb{j}F}{\pb{j}\omega_X^{\C}}} \\
    		& \simeq\Sec{U}{\sHom{U}{\pb{j}F}{\omega_U^{\C}}} \\
    		& \simeq\sHom{\C}{\cSec{U}{F}}{\mathds{1}_{\C}}
    	\end{align*}
    	and thus we have the desired factorization.
    \end{proof}

    Our next goal to prove \cref{verdduality} is to show that the covariant Verdier duality functor $\mathbb{D}$ sends constructible sheaves to constructible cosheaves. This is a consequence of constructibility of the dualizing sheaf, that we prove in \cref{dualconstr}. We first compute the stalk at the cone point of the dualizing sheaf of a cone on a compact $C^0$-stratified space.
    
    \begin{lemma}\label{stalkatconeptdualizing}
    	Let $Z$ be a compact $C^0$-stratified space. Denote by $X$ the cone $C(Z)$, and let $x\in C(Z)$ be the cone point. Let $\mathds{1}\in\C$ be the monoidal unit, and let $\mathds{1}_X\in\Sh{X}{\C}$ be the constant sheaf at $\mathds{1}$. Then we have an equivalence
    	$$(\omega_X^{\C})_x\simeq D^{\C}(\KSec{X}{\mathds{1}_X}{\{x\}}).$$
    \end{lemma}

    \begin{proof}
    	We have equivalences
    	\begin{align*}
    		(\omega_X^{\C})_x &= \varinjlim\limits_{x\in U}\Sec{U}{\omega_X^{\C}} \\
    		& \simeq \varinjlim\limits_{0<\epsilon\leq\infty}\Sec{C_{\epsilon}(Z)}{\omega_X^{\C}} \\
    		& \simeq\varinjlim\limits_{0<\epsilon\leq\infty}D^{\C}(\cSec{C_{\epsilon}(Z)}{\mathds{1}_X}).
    	\end{align*}
    	where the second follows from \cref{basisconept} and the third by \cite[Proposition 6.12]{volpe2021six}. Here $C_{\infty}(Z)$ refers to $C(Z)$. 
        Notice that the colimit above is indexed by a weakly contractible $\infty$-category. Therefore it will suffice to show that, for any $\epsilon$, the map $$\KSec{X}{\mathds{1}_X}{\{x\}}\simeq\KSec{C_{\epsilon}(Z)}{\mathds{1}_X}{\{x\}}\rightarrow\cSec{C_{\epsilon}(Z)}{\mathds{1}_X}$$
    	is invertible (see \cite[Remark 5.7]{volpe2021six} for a proof of why the first equivalence holds).
    	
    	First of all, notice that for any $K\subseteq C_{\epsilon}(Z)$ compact containing the cone point, there exists a $T\geq 0$ such that $K\subseteq\overline{C_{T}(Z)}$ (namely, take $T$ to be the maximum in the image of $K$ through the projection $C_{\epsilon}(Z)\rightarrow\mathbb{R}_{\geq 0}$). Hence, by a cofinality argument, we have a commutative triangle
    	$$
    	\begin{tikzcd}
    		\KSec{X}{\mathds{1}_X}{\{x\}}\arrow[d]\arrow[dr] & \\
    		\varinjlim\limits_{0\leq T<\epsilon}\KSec{X}{\mathds{1}_X}{\overline{C_{T}(Z)}} \arrow[r, "\simeq"] &  \cSec{C_{\epsilon}(Z)}{\mathds{1}_X}
    	\end{tikzcd}
    	$$
    	where we fix $\{x\}\coloneqq\overline{C_{T}(Z)}$. 
    	Notice that $0$ is the initial object in the indexing poset of the colimit appearing above. Hence, to conclude our proof it suffices to prove that, for any $T$, the map $$\KSec{X}{\mathds{1}_X}{\{x\}}\rightarrow\KSec{X}{\mathds{1}_X}{\overline{C_{T}(Z)}}$$ is invertible. By definition, this holds if and only if the restriction $$\Sec{X\setminus\{x\}}{\mathds{1}_X}\rightarrow\Sec{X\setminus\overline{C_{T}(Z)}}{\mathds{1}_X}$$ is invertible. But the inclusion $X\setminus\overline{C_{T}(Z)}\hookrightarrow X\setminus\{x\}$ is a homotopy equivalence, and so we may conclude by the homotopy invariance of the shape.
    \end{proof}

	\begin{proposition}\label{dualconstr}
		Let $X$ be any $C^0$-stratified topological space. Then the dualizing sheaf $\omega_X^{\C}$ is constructible.
	\end{proposition}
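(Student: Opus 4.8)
The plan is to argue by induction on the depth $\dpt{X} = k$, mirroring the inductive strategy already used for \cref{finexit}.

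First I would reduce to the case $\C = \spectra$. Since the dualizing sheaf is defined via the exceptional pullback $\pbp{a}$ along $a\colon X\to\ast$, and the six-functor machinery of \cite{volpe2021six} is compatible with the monoidal structure, one has $\omega_X^{\C}\simeq \omega_X^{\spectra}\otimes\mathds{1}_{\C}$ in an appropriate sense (more precisely, $\pbp{a}_{\C}$ factors through $\pbp{a}_{\spectra}$ followed by the canonical functor $\spectra\to\C$ induced by $\mathds{1}_{\C}$, which preserves sheaves, stalks and colimits). Since this functor $\spectra\to\C$ sends finite (dualizable) spectra to dualizable objects of $\C$ and commutes with restriction to strata, it suffices to prove that $\omega_X^{\spectra}$ is constructible; this is the content of the reduction that the introduction refers to.

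Next, for the base case $k=0$, $X$ is a smooth manifold (or more generally a $0$-stratified $C^0$-space), $\omega_X^{\spectra}$ is locally constant with stalks equivalent to $\mathbb{S}^n$ up to shift/twist (locally the dualizing complex of $\mathbb{R}^n$ is the $n$-sphere spectrum), which is certainly dualizable. Formal constructibility is automatic since there is a single stratum. For the inductive step, I would use the local structure: around a point $x$ in a stratum of maximal depth, $X$ has a conical chart $\rnum^n\times C(Z)$ with $Z$ a compact $C^0$-stratified space of strictly smaller depth. By homotopy invariance and the base-change results of \cite{volpe2021six}, formal constructibility of $\omega_X^{\spectra}$ away from the deepest stratum follows from the inductive hypothesis applied to $Z$ (or rather to the open complement, which has smaller depth), together with \cref{restrata} to control the restriction along strata. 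So the only genuine issue is to check that the stalk of $\omega_X^{\spectra}$ at the cone point of $C(Z)$ — equivalently, $\Sec{\rnum^n\times C(Z)}{\omega}$ by \cref{stalksconstr} — is a finite (dualizable) spectrum.

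The hard part will be exactly this stalk computation at the cone point. The strategy is to exploit the recollement/fiber sequence coming from the open-closed decomposition of $C(Z)$ into its cone point and the complement $Z\times\rnum_{>0}$. Concretely, writing $j\colon Z\times\rnum_{>0}\hookrightarrow C(Z)$ for the open inclusion and $i$ for the closed cone point, one gets a fiber sequence relating $i^{!}\omega$, the global sections $\pf{p}\pbp{j}\omega \simeq \pf{p}\,\omega_{Z\times\rnum_{>0}}$, and a term computing sections over a punctured cone; using properness of $Z$, base change identifies the relevant term with $\Sec{Z}{\omega_Z^{\spectra}}$ up to a shift, which is a finite spectrum by \cref{globsecconstrshcpt} applied to the constructible sheaf $\omega_Z^{\spectra}$ (constructible by the inductive hypothesis). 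The cone point being contractible in the stratified sense, homotopy invariance collapses the remaining contributions, and one concludes that the stalk is built from finitely many finite spectra via finite (co)limits, hence dualizable. Care is needed to set up the fiber sequence correctly and to track the compactness hypothesis on $Z$ so that base change for $\pf{p}$ applies; this bookkeeping, rather than any deep new idea, is where the real work lies.
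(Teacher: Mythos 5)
Your overall skeleton (reduction to $\C=\spectra$ via $\omega_X^{\C}\simeq\omega_X^{\spectra}\otimes\mathds{1}_{\C}$, induction on depth, reduction to a conical chart and then to the stalk at the cone point of $C(Z)$) matches the paper's proof. The genuine divergence, and the gap, is in the final step: you propose to bound the cone-point stalk by a fiber sequence whose third term is (a shift of) $\Sec{Z}{\omega_Z^{\spectra}}$, and to deduce its finiteness from the inductive hypothesis together with \cref{globsecconstrshcpt}. But \cref{globsecconstrshcpt} is proved only for \emph{compact conically smooth} stratified spaces --- it rests on \cref{finexit} and the exodromy equivalence, both of which use the unzip construction and the conically smooth atlas in an essential way --- whereas \cref{dualconstr} is asserted for arbitrary $C^0$-stratified spaces, where the link $Z$ of a conical chart is only a compact $C^0$-stratified space. (The paper explicitly declines to prove finiteness of exit paths in the $C^0$ setting.) The same caveat applies to your appeal to \cref{restrata}, which is also stated only for conically smooth spaces; the paper sidesteps this by reducing all the way to $X=C(Z)$, where the closed stratum is a single point and local constancy along it is automatic. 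So as written your argument proves the proposition only under a conical smoothness hypothesis, not in the stated generality.

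The paper's route around this is worth contrasting with yours. Instead of dualizing $\omega_Z$ on the link, it identifies the cone-point stalk directly with $\varinjlim_{\epsilon>0}\sHom{}{\cSec{C_{\epsilon}(Z)}{\s{X}}}{\s{}}$, shows that the compactly supported sections $\cSec{C_{\epsilon}(Z)}{\s{X}}$ are independent of $\epsilon$ and equal to $\mathrm{fib}\bigl(\s{}\rightarrow\sHom{\spectra}{\infsusp_+ Z}{\s{}}\bigr)$ via the fiber sequence for sections supported on $\overline{C_T(Z)}$ and homotopy invariance of the shape, and then proves that $\infsusp_+ Z$ is a finite spectrum using only that $Z$ is compact: $\sHom{}{\infsusp_+ Z}{-}\simeq\pfp{a}\pb{a}$ preserves filtered colimits, so $\infsusp_+ Z$ is a compact, hence finite, object of $\spectra$. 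This is the step that buys the $C^0$ generality. If you are content to prove the statement only for conically smooth $X$ (which is all that is needed for \cref{verdduality}), your argument via \cref{globsecconstrshcpt} does go through and is arguably more conceptual; but you should either restrict the hypothesis accordingly or replace that invocation with an argument, like the paper's, that uses only compactness of $Z$.
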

    \begin{proof}	    
	    
	    We will proceed by induction on the depth of $X$. If $X$ has depth 0, then $X$ is a topological manifold (see \cite[Lemma 2.22]{nocera2021whitney}), and hence, by \cite[Proposition 6.18]{volpe2021six}, $\omega_X^{\C}$ is locally equivalent to $\Sigma^{\text{dim}(X)}\mathds{1}_X$. Now assume that $X$ has finite non-zero depth. Since the question is local on $X$, by \cite[Lemma 2.2.2]{ayala2017local} we may assume that $X = \mathbb{R}^n\times C(Z)$, where $Z$ is a compact $C^0$-stratified space with $\text{depth}(Z)<\text{depth}(X)$.

    	Let $p : \mathbb{R}^n\times C(Z)\rightarrow C(Z)$ be the projection and $b : C(Z)\rightarrow\ast$ the unique map. By \cite[Proposition 6.18]{volpe2021six}, for any sheaf $F$ on $C(Z)$ we have a functorial equivalence $\pbp{p}F\simeq\Sigma^n\pb{p}F$, so it suffices to show that $\pbp{b}_{\C}\s{}=\omega^{\C}_{C(Z)}$ is constructible. Hence we may assume that $X = C(Z)$.     	
    	 
    	Let $x$ be the cone point and $j :  U\hookrightarrow X$ its open complement. Since $x$ is the point at which the depth is maximal, we have $\dpt{U}<\dpt{X}$. Moreover we have an equivalence $\pb{j}\omega_X^{\C}\simeq\omega_U^{\C}$, and so by the inductive hypothesis $\pb{j}\omega_X^{\C}$ is constructible. Thus, for every stratum $T\subseteq X$ which does not contain the cone point, the restriction of $\omega_X$ is locally constant with dualizable stalks. Hence it remains to prove that the stalk of $\omega_X$ at the cone point is dualizable. 
    
        Since the dual of a dualizable object is again dualizable, by \cref{stalkatconeptdualizing} it suffices to show that $\KSec{X}{\mathds{1}_X}{\{x\}}$ is dualizable. By definition, $\KSec{X}{\mathds{1}_X}{\{x\}}$ is the fiber of the restriction $\Sec{X}{\mathds{1}_X}\rightarrow\Sec{X\setminus\{x\}}{\mathds{1}_X}$. Using the homotopy invariance of the shape, we get a commutative square
        $$
        \begin{tikzcd}
        	\Sec{X}{\mathds{1}_X}\arrow[r]\arrow[d, "{\simeq}"] & \Sec{X\setminus\{x\}}{\mathds{1}_X}\arrow[d, "{\simeq}"] \\
        	\mathds{1}\arrow[r] & \Sec{Z}{\mathds{1}_Z}
        \end{tikzcd}
        $$
        where the right vertical arrow is induced by any inclusion $Z\hookrightarrow X$ inducing an homotopy equivalence between $Z$ and $X\setminus\{x\}$, and the left vertical arrow by the inclusion of $x$ in $X$. Since $\C^{\text{dual}}$ is stable, it suffices to show that $\Sec{Z}{\mathds{1}_Z}$ is dualizable. But this follows from \cref{cptloccontrhasdualglobsec}. 
\end{proof}

\begin{corollary}\label{covverdpreservesformconstr}
	Let $X$ be a conically smooth stratified space. Then the covariant Verdier duality functor $\mathbb{D}_{X}^{\C}$ restricts to an equivalence
    \begin{equation}\label{verdfc}
		\mathbb{D}: \fcSh{X}{\C}\simeq\fccoSh{X}{\C}.
	\end{equation}
\end{corollary}

\begin{proof}
	The inverse of the covariant Verdier functor $\mathbb{D}_{X}^{\C}$ is given by $(\mathbb{D}_{X}^{\C\op})\op$ (for example, see the proof of \cite[Theorem 5.10]{volpe2021six}). Therefore, is suffices to show that $\mathbb{D}_{X}^{\C}$ preserves formally constructible objects for any $\C$ stable and bicomplete.
	
	First of all, we prove that if $F\in\Sh{X}{\C}$ is locally constant, then $\mathbb{D}F$ is a formally constructible cosheaf. Since restricting along an open immersion commutes with $\mathbb{D}$ (see \cite[Lemma 6.5]{volpe2021six}), and the property of being formally constructible can be checked on an open cover, it suffices to show that $\mathbb{D}$ sends constant sheaves to formally constructible sheaves. 
	
	Assume $F\simeq\pb{a}M$ where $a : X\rightarrow\ast$ is the unique map. In this case, by \cite[Definition 6.1]{volpe2021six} we have that $\mathbb{D}F\simeq\pbp{a}_{\C\op}M$. Moreover, by \cite[Proposition 6.16]{volpe2021six}, we have that $\pbp{a}_{\C\op}M\simeq\omega_X^{\C\op}\otimes\pb{a}M$. Therefore, by \cref{dualconstr} we get that $\mathbb{D}F$ is constructible.
	
	Assume now that $F$ is any formally constructible sheaf, and let $i : X_{\alpha}\hookrightarrow X$ by the inclusion of a stratum of $X$, with complement $j:U\hookrightarrow X$. We need to show that $\pb{i}_{\C\op}\mathbb{D}F\simeq\mathbb{D}\pbp{i}_{\C}F$ is locally constant. Notice that it suffices to show that $\pbp{i}_{\C}F$ is locally constant. Indeed, since $X_{\alpha}$ is a smooth manifold, it is in particular conically smooth. Therefore, by what we have proven before, if $G$ is any locally constant sheaf on $X_{\alpha}$, $\mathbb{D}G$ must be formally constructible. But $X_{\alpha}$ is unstratified, and hence being formally constructible on $X_{\alpha}$ is equivalent to being locally constant. But by \cref{shriekrestrofformconstrislocconst} we know that $\pbp{i}_{\C}F$ is locally constant, and therefore our proof is concluded.
\end{proof}

\begin{proposition}\label{covverdpreservesconstr}
	Let $X$ be a conically smooth stratified space. Then the covariant Verdier duality functor $\mathbb{D}_{X}^{\C}$ restricts to an equivalence
	\begin{equation}\label{verdc}
		\mathbb{D}: \cSh{X}{\C}\simeq\ccoSh{X}{\C}.
	\end{equation}
\end{proposition}

\begin{proof}
	By \cref{covverdpreservesformconstr}, it suffices to show that, for any $x\in X$ and $F\in\fcSh{X}{\C}$, $F_x\in\C$ is dualizable if and only if $(\mathbb{D}F)_x$ is dualizable, where the latter denotes the costalk of $\mathbb{D}F$ at $x$. 
	
	Let $x : \ast\hookrightarrow X$ be the inclusion of a point $x\in X$. By definition, there are equivalences $(\mathbb{D}F)_x\simeq \pb{x}_{\C\op}(\mathbb{D}F)\simeq\pbp{x}_{\C}F$. Thus, by applying global sections to the localization sequence associated to $i$ and $j$, we obtain a fiber sequence
	$$\pbp{x}F\simeq\Sec{X}{\pf{i}\pbp{i}F}\rightarrow \Sec{X}{F}\rightarrow\Sec{U}{F}$$
	and hence an equivalence $$\KSec{X}{F}{\{x\}}\simeq \pbp{i}F.$$
	Thus, by choosing a conical chart $\rnum^n\times C(Z)$ around $x$ and by applying \cref{stalksconstr}, we get a fiber sequence
	$$(\mathbb{D}F)_x\rightarrow F_x\rightarrow\Sec{(\rnum^n\times C(Z))\setminus(0, \ast)}{F}$$
	where $\ast\in C(Z)$ denotes the cone point. Therefore, arguing as in the proof of \cref{globsecconstrshcpt}, it suffices to show that $\exit{}{(\rnum^n\times C(Z))\setminus(0, \ast)}$ is finite. But by Van Kampen for exit paths, one has a pushout 
	$$
	\begin{tikzcd}
		\exit{}{\rnum^n\setminus\{0\}\times \rnum_{>0}\times Z} \arrow[r] \arrow[d] & \exit{}{\rnum^n\times\rnum_{>0}\times Z} \arrow[d] \\
		\exit{}{\rnum^n\setminus\{0\}\times C(Z)} \arrow[r] & \exit{}{\rnum^n\times C(Z))\setminus(0, \ast)}. \arrow[ul, phantom, "\usebox\pushout" , very near start, color=black]
	\end{tikzcd}
	$$  
	The result then follows by observing that $\text{Exit}$ commutes with products, $\exit{}{Z}$ and $\exit{}{C(Z)}$ are both finite by \cref{finexit} and \cref{exitcone}, and $\exit{}{\rnum^n\setminus\{0\}}\simeq \sing{S^{n-1}}$ is finite.
\end{proof}

\begin{theorem}\label{verdduality}
	Let $X$ be a conically smooth stratified space. Then the restriction to $\cSh{X}{\C}\op$ of the functor $D_X^{\C}$ factors through an equivalence 
	$$
	\begin{tikzcd}
		D_X^{\C}:\cSh{X}{\C}\op\arrow[r, "\simeq"] & \cSh{X}{\C}.
	\end{tikzcd}
	$$
\end{theorem}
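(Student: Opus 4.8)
The plan is to realize MacPherson's picture by factoring $D_X^{\C}$ through Lurie's Verdier duality. Recall that for $\C$ stable and bicomplete and $X$ locally compact Hausdorff, the assignment $F\mapsto(U\mapsto\cSec{U}{F})$ of compactly supported sections defines an equivalence $\mathbb{D}_{\C}\colon\Sh{X}{\C}\xrightarrow{\ \sim\ }\coSh{X}{\C}$ (\cite[Theorem 5.5.5.1]{lurie2017higher}). Using that $\omega_X^{\C}=\pbp{a}\mathds{1}_{\C}$ is dualizing — concretely, the projection formula and the adjunctions relating $\pfp{}$ with $\pbp{}$ — one obtains for every open $U\subseteq X$ a natural equivalence
$$\Sec{U}{D_X^{\C}F}\ \simeq\ \sHom{\C}{\cSec{U}{F}}{\mathds{1}_{\C}}\ =\ D^{\C}\bigl(\cSec{U}{F}\bigr).$$
Thus $D_X^{\C}$ is the composite of $\mathbb{D}_{\C}$ with the functor $\mathbf{D}\colon\coSh{X}{\C}^{op}\to\Sh{X}{\C}$ that applies $D^{\C}$ objectwise (the outcome is a sheaf because $D^{\C}$ sends the codescent colimits of a cosheaf to descent limits). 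Since $\mathbb{D}_{\C}$ is an equivalence, the theorem reduces to two claims: (1) $\mathbb{D}_{\C}$ restricts to an equivalence $\cSh{X}{\C}\simeq\ccoSh{X}{\C}$; and (2) $\mathbf{D}$ restricts to an equivalence $\ccoSh{X}{\C}^{op}\simeq\cSh{X}{\C}$.

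The formally constructible version of (1) is purely formal, and this is where closure of the coefficients under $\C\mapsto\C^{op}$ pays off. By \cref{dualconstr}, $\omega_X^{\C}$ is constructible, so by the six-functor machinery of \cite{volpe2021six} the functor $\pbp{a}_{\C}\colon\C\to\Sh{X}{\C}$ factors through $\fcSh{X}{\C}$; combined with homotopy invariance (\cref{htpyinv}) and the identification of restriction to strata (\cref{restrata}) — concretely, by evaluating $\cSec{\rnum^n\times C(Z)}{F}$ on a conical chart — this shows that $\mathbb{D}_{\C}$ maps $\fcSh{X}{\C}$ into $\fccoSh{X}{\C}$ and $\mathbf{D}$ maps $\fccoSh{X}{\C}^{op}$ into $\fcSh{X}{\C}$. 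Applying the same to $\C^{op}$ and using that the inverse of $\mathbb{D}_{\C}$ is, up to opposites, $\mathbb{D}_{\C^{op}}$, one gets the converse inclusions; so already $\mathbb{D}_{\C}$ restricts to an equivalence $\fcSh{X}{\C}\simeq\fccoSh{X}{\C}$, and likewise $\mathbf{D}$ on formally constructible objects (using in addition the biduality $D^{\C}D^{\C}\simeq\mathrm{id}$ on dualizable objects).

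The real content is the dualizable-stalk half of (1), and it is here that conically smooth geometry enters. By \cref{stalksconstr} the stalk of a formally constructible sheaf at a point is computed on any conical chart $\rnum^n\times C(Z)$ around it, and by homotopy invariance (\cref{restrata}) dualizability of stalks along a stratum reduces to the cone point of a basic whose link has strictly smaller depth; an induction on $\dpt{X}$ therefore reduces the problem to $X=C(Z)$ with $Z$ compact conically smooth and $x$ the cone point. There \cref{stalksconstr} together with homotopy invariance yields the fiber sequence
$$\KSec{X}{F}{x}\ \longrightarrow\ F_x\ \longrightarrow\ \Sec{Z}{F},$$
whose left-hand term is precisely the stalk at $x$ of the cosheaf $\mathbb{D}_{\C}(F)$. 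Now \cref{finexit} gives that $\exit{}{Z}$ is a finite $\infty$-category, hence by exodromy (\cref{exodromy}) and \cref{globsecconstrshcpt} the object $\Sec{Z}{F}$ is dualizable; as $\C^c$ is a stable subcategory of $\C$, the fiber sequence forces $F_x$ to be dualizable if and only if $\KSec{X}{F}{x}$ is. This settles both implications in (1), so $\mathbb{D}_{\C}$ restricts to an equivalence $\cSh{X}{\C}\simeq\ccoSh{X}{\C}$.

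For (2), $\mathbf{D}$ carries $\ccoSh{X}{\C}$ into $\cSh{X}{\C}$ by the same reasoning — objectwise $D^{\C}$ interchanges the stalks of $\mathbf{D}(G)$ with the stalks of the cosheaf $G$ up to dualization, and preserves local constancy along strata — and under the exodromy identifications $\cSh{X}{\C}\simeq\Fun(\exit{}{X},\C^c)$ and $\ccoSh{X}{\C}\simeq\Fun(\exit{}{X}^{op},\C^c)$ (\cref{exodromy}) the functor $\mathbf{D}$ corresponds to postcomposition with the equivalence $D^{\C}\colon(\C^c)^{op}\xrightarrow{\ \sim\ }\C^c$, hence is itself an equivalence. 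Composing the restricted equivalences from (1) and (2) yields $D_X^{\C}\colon\cSh{X}{\C}^{op}\xrightarrow{\ \sim\ }\cSh{X}{\C}$. I expect the main obstacle to be the geometric input \cref{finexit} — the finiteness of the exit-path $\infty$-category of a compact conically smooth space, obtained via resolution of singularities with the unzip construction — since without it one cannot conclude that $\Sec{Z}{F}$ is dualizable; the remaining steps, including the base-change manipulations of the second paragraph, are formal.
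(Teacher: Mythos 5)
Your proposal is correct and follows essentially the same route as the paper: factor $D_X^{\C}$ through Lurie's equivalence $\mathbb{D}_{\C}$, show $\mathbb{D}_{\C}$ preserves formal constructibility formally via \cref{dualconstr} and homotopy invariance, handle dualizability of stalks via the fiber sequence $\KSec{X}{F}{x}\to F_x\to\Sec{Z}{F}$ together with \cref{finexit} and \cref{globsecconstrshcpt}, and conclude the last leg by exodromy. The only (harmless) cosmetic difference is that you reduce to $X=C(Z)$ with compact link, whereas the paper works directly on the punctured chart $(\rnum^n\times C(Z))\setminus(0,\ast)$ and checks finiteness of its exit-path $\infty$-category by Van Kampen.
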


\begin{proof}
	By \cref{factorizationcovverdcontrverd} and \cref{covverdpreservesconstr}, we only need to show that 
	$$D^{\C}_{\bullet} : \ccoSh{X}{\C}\op\rightarrow\cSh{X}{\C}$$
	is an equivalence. Denote again by $\text{im}:\text{Bsc}_{/X}\rightarrow\cate{U}(X)$ the functor taking a conically smooth open immersion into $X$ to its image. We see that the diagram
	$$
	\begin{tikzcd}
		\Fun(\exit{}{X}, (\C^{\text{dual}})\op) \arrow[r, "D^{\C}_{\bullet}"] \arrow[d, "\pb{\gamma}", "\simeq"'] & \Fun(\exit{}{X}, \C^{\text{dual}}) \arrow[d, "\pb{\gamma}", "\simeq"'] \\
		\Fun_{\cate{W}}((\text{Bsc}_{/X})\op, (\C^{\text{dual}})\op) \arrow[r, "D^{\C}_{\bullet}"] & \Fun_{\cate{W}}((\text{Bsc}_{/X})\op, \C^{\text{dual}}) \\
		\ccoSh{X}{\C}\op \arrow[u, "\pb{\text{im}}"', "\simeq"] \arrow[r, "D^{\C}_{\bullet}"] & \cSh{X}{\C} \arrow[u, "\pb{\text{im}}"', "\simeq"]
	\end{tikzcd}
	$$
	commutes, since the horizontal arrows are given by postcompositions and the vertical arrows by precompositions. Moreover, since the restriction of $D^{\C}$ induces a duality on $\C^{\text{dual}}$, the upper horizontal arrows are equivalences, and thus we get the desired conclusion.
\end{proof}

\begin{example}
	Let us give an explicit description of what Verdier duality looks like for the stratified space $X$ appearing in \cref{exampleexit}. Let $\C$ be any stable and bicomplete $\infty$-category. For any map $\alpha:M\rightarrow N^{h\mathbb{Z}}$ in $\C$ as in \cref{exampleconsafterexit}, we get a map $\Omega N^{h\mathbb{Z}}\rightarrow\text{fib}(\alpha)$. It follows by Poincar\'e duality for manifolds (see \cite[Proposition 6.18]{volpe2021six}) that there is an equivalence $N^{h\mathbb{Z}}\simeq\Omega N_{h\mathbb{Z}}$. Therefore, we have a map $\Omega^2 N_{h\mathbb{Z}}\rightarrow\text{fib}(\alpha)$. This can be upgraded to a functor
	$$
	\begin{tikzcd}[row sep=tiny]
		\Fun(B\mathbb{Z}^{\triangleleft},\C)\arrow[r] & \Fun(B\mathbb{Z}^{\triangleright},\C) \\
		(\alpha:M\rightarrow N^{h\mathbb{Z}})\arrow[r, maps to] & (\tilde{\alpha}:\Omega^2 N_{h\mathbb{Z}}\rightarrow\text{fib}(\alpha)),
	\end{tikzcd}
    $$
    which is easily seen to be an equivalence. We invite the interested reader to work out the details to show that the functor given above coincides with (\ref{verdfc}), after applying the exodromy equivalence.
\end{example}


\begin{remark}\label{>AMGR}
	In \cite[Example 1.10.8]{ayala2019stratified}, the authors propose a strategy to prove Verdier duality. However, they do not provide proofs for some of the major steps in their outline. We here specify what are the main missing points in \cite[Example 1.10.8]{ayala2019stratified}. Let $X\rightarrow P$ be any stratified topological space. First of all, in \cite[Example 1.10.8]{ayala2019stratified} there is no explanation of why the stratification  on $\Sh{X}{\C}$ restricts to a stratification on $\fcSh{X}{\C}$. We verify this for conically smooth stratified spaces in the proof of \cref{shriekrestrofformconstrislocconst}. Secondly, in \cite[Example 1.10.8]{ayala2019stratified} the authors claim without proof that, if $\omega_X$ is formally constructible, then the covariant Verdier duality functor preserves formally constructible objects. We prove this claim in \cref{covverdpreservesformconstr}. The authors also do not explain for which kind of stratified topological spaces one should expect the dualizing sheaf to be formally constructible. We show that this is the case for $C^0$-stratified spaces in \cref{dualconstr}.
\end{remark}

\begin{remark}
    Notice that the equivalences (\ref{verdfc}) and (\ref{verdc}) are already interesting on their own, because they imply that for any stratified map $f:X\rightarrow Y$, $\pf{f}^{\C}$ or $\pb{f}_{\C}$ preserves (formal) constructibility if and only if $\pfp{f}^{\C}$ or $\pbp{f}_{\C}$ does. In particular, we see that $\pbp{f}_{\C}$ always preserves (formally) constructible sheaves.
\end{remark}

\begin{remark}
	Any $\mu$-stratification of an analytic manifold in the sense of \cite{kashiwara1990sheaves} satisfies the Whitney conditions, and hence by \cite{nocera2021whitney} defines a conically smooth structure. Thus, \cref{verdduality} recovers and generalizes the duality on constructible sheaves on analytic manifolds as defined in \cite{kashiwara1990sheaves} (i.e. sheaves which are constructible in our sense with respect to \textit{some} $\mu$-stratification).
\end{remark}


\appendix

\section{The shape of a proper locally contractible $\infty$-topos}

In this appendix, we present a couple of topos-theoretic results that provide an elegant argument to conclude the last step in the proof of \cref{dualconstr}. While the content of this appendix is not fundamental for our primary purposes, we have decided to include it because we believe it is interesting on its own. More precisely, in this appendix we prove that the \textit{shape} of any \textit{proper} and \textit{locally contractible} $\infty$-topos is a compact $\infty$-groupoid.

We start by recalling the definition of the shape of a locally contractible $\infty$-topos. For a more general and detailed discussion about shape and locally contractible geometric morphisms, see \cite[Section 3]{volpe2021six}.

\begin{definition}
	Let $\X$ be an $\infty$-topos, and let $a:\X\rightarrow\spaces$ be the unique geometric morphism. We say that $\X$ is \textit{locally contractible} if $\pb{a}:\spaces\rightarrow\X$ admits a left adjoint, denoted by $\pfs{a}:\X\rightarrow\spaces$.
	
	If $\X$ is any locally contractible $\infty$-topos, we define the \textit{shape} of $\X$, denoted by $\Pi_{\infty}(\X)$, as the $\infty$-groupoid $\pfs{a}(1_{\X})$, where $1_{\X}$ denotes the terminal object of $\X$.
\end{definition}

We now show that sheaf topoi associated to $C^0$-stratified spaces are locally contractible. We need the following preliminary lemma.

\begin{lemma}\label{C0ishypercomplete}
	Let $X$ be a $C^0$-stratified space. Then the $\infty$-topos $\Sh{X}{\spaces}$ is hypercomplete.
\end{lemma}

\begin{proof}
	By \cite[Lemma 2.2.2]{ayala2017local}, $X$ admits an open cover given by its open subsets isomorphic as stratified spaces to one of the type $\mathbb{R}^n\times C(Z)$, where $Z$ is a compact $C^0$-stratified space. Therefore, $X$ is locally paracompact and of finite covering dimension. By \cite[Theorem 7.2.3.6]{lurie2009higher}, the covering dimension of a paracompact space agrees with its \textit{homotopy dimension} (see \cite[Definition 7.2.1.1]{lurie2009higher}). Moreover, by \cite[Corollary 7.2.1.12]{lurie2009higher} any $\infty$-topos which is locally of finite homotopy dimension is hypercomplete. Therefore, we conclude that $\Sh{X}{\spaces}$ is hypercomplete.
\end{proof}

\begin{corollary}
	Let $X$ be a $C^0$-stratified space. Then the $\infty$-topos $\Sh{X}{\spaces}$ is locally contractible. Moreover, we have an equivalence of $\infty$-groupoids $\Pi_{\infty}(\Sh{X}{\spaces})\simeq\sing{X}$.
\end{corollary}

\begin{proof}
	Since by \cite[Lemma 2.2.2]{ayala2017local} the topological space $X$ is locally contractible, the result follows from \cref{C0ishypercomplete} and \cite[Corollary 3.19]{volpe2021six}.
\end{proof}

\begin{definition}\label{defpropertopos}
	Let $\X$ be an $\infty$-topos, and let $a:\X\rightarrow\spaces$ be the unique geometric morphism. We say that $\X$ is \textit{proper} if $\pf{a}:\X\rightarrow\spaces$ preserves filtered colimits.
\end{definition}

\begin{remark}
	It would be very natural to define an $\infty$-topos to be proper by requiring the unique geometric morphism $a:\X\rightarrow\spaces$ to be proper in the sense of \cite[Definition 7.3.1.4]{lurie2009higher}. This alternative definition is proven to be equivalent to \cref{defpropertopos} in \cite{martini2023proper}.
\end{remark}

\begin{proposition}\label{cptshapepropertopos}
	Let $\X$ be a proper and locally contractible $\infty$-topos. Then $\Pi_{\infty}(\X)$ is a compact object in $\spaces$.
\end{proposition}

\begin{proof}
	Let $a:\X\rightarrow\spaces$ be the unique geometric morphism. Observe that $\pf{a}:\X\rightarrow\spaces$ is corepresented by the terminal object $1_{\X}$. Since $\X$ is assumed to be proper, we see that $1_{\X}$ must be a compact object in $\X$. Therefore, to conclude the proof it suffices to show that $\pfs{a}:\X\rightarrow\spaces$ preserves compact objects. But this is clear because its right adjoint $\pb{a}$ preserves (filtered) colimits.
\end{proof}

\begin{corollary}\label{cptloccontrhasdualglobsec}
	Let $X$ be any compact Hausdorff topological space, and assume that $\Sh{X}{\spaces}$ is locally contractible. Let $\C$ be any stable bicomplete $\infty$-category equipped with a closed symmetric monoidal structure. Let $M\in\C$ be any dualizable object, and denote by $M_X$ the constant sheaf at $M$. Then $\Sec{X}{M_X}$ is dualizable.
\end{corollary}

\begin{proof}
	Let $a:X\rightarrow\ast$ be the unique map. Recall that, by \cite[Corollary 5.16]{volpe2021six}, we have an equivalence $\Sh{X}{\C}\simeq\Sh{X}{\spaces}\otimes\C$, where the $\otimes$ denotes Lurie's tensor product of cocomplete $\infty$-categories. Since $X$ is locally contractible, combining \cite[Corollary 5.16]{volpe2021six} and \cite[Corollary 5.20]{volpe2021six}, we see that $\pb{a}_{\C}:\C\rightarrow\Sh{X}{\C}$ admits a left adjoint $\pfs{a}^{\C}$ obtained by tensoring with $\C$ the cocontinuous functor $\pfs{a}:\Sh{X}{\spaces}\rightarrow\spaces$. In particular, if we denote by $\mathds{1}_X$ the constant sheaf at the monoidal unit $\mathds{1}\in\C$, we get that $$\pfs{a}^{\C}(\mathds{1}_X)\simeq\varinjlim\limits_{\Pi_{\infty}(X)}\mathds{1}.$$
	Here $\Pi_{\infty}(X)$ denotes the shape of the locally contractible $\infty$-topos $\Sh{X}{\spaces}$. Moreover, it follows from the dual version of the smooth projection formula (see \cite[Corollary 3.26]{volpe2021six}) that there is an equivalence $\Sec{X}{M_X}\simeq\sHom{\C}{\pfs{a}^{\C}(\mathds{1}_X)}{M}$. Hence we have $$\Sec{X}{M_X}\simeq\varprojlim\limits_{\Pi_{\infty}(X)}M.$$
	Since $\C^{\text{dual}}$ is an idempotent complete stable $\infty$-category, we can conclude by \cref{cptshapepropertopos}.
\end{proof}

\newpage
\nocite{*}
\bibliographystyle{alpha}
\bibliography{dualizing}
\end{document}